\DeclareMathOperator{\Ind}{Ind}
\DeclareMathOperator{\Mor}{Mor}\DeclareMathOperator{\Mod}{Mod}
\DeclareMathOperator{\Res}{Res}
\DeclareMathOperator{\Spec}{Spec}
\begin{document}

\newcounter{rownum}
\setcounter{rownum}{0}
\newcommand{\ab}{\addtocounter{rownum}{1}\arabic{rownum}}

\newcommand{\x}{$\times$}
\newcommand{\bb}{\mathbf}

\newcommand{\R}{\mathrm{R}}
\newcommand{\A}{\mathbb{A}}
\newcommand{\RR}{\mathscr{R}}
\newcommand{\G}{\mathscr{G}}
\newcommand{\hra}{\hookrightarrow}
\newcommand{\sss}{\mathrm{ss}}
\newtheorem{lemma}{Lemma}[section]
\newtheorem{theorem}[lemma]{Theorem}
\newtheorem*{TA}{Theorem A}
\newtheorem*{TB}{Theorem B}
\newtheorem*{TC}{Theorem C}
\newtheorem*{CorC}{Corollary C}
\newtheorem*{TD}{Theorem D}
\newtheorem*{TE}{Theorem E}
\newtheorem*{PF}{Proposition E}
\newtheorem*{C3}{Corollary 3}
\newtheorem*{T4}{Theorem 4}
\newtheorem*{C5}{Corollary 5}
\newtheorem*{C6}{Corollary 6}
\newtheorem*{C7}{Corollary 7}
\newtheorem*{C8}{Corollary 8}
\newtheorem*{claim}{Claim}
\newtheorem{cor}[lemma]{Corollary}
\newtheorem{conjecture}[lemma]{Conjecture}
\newtheorem{prop}[lemma]{Proposition}
\newtheorem{question}[lemma]{Question}
\theoremstyle{definition}
\newtheorem{example}[lemma]{Example}
\newtheorem{examples}[lemma]{Examples}
\newtheorem{algorithm}[lemma]{Algorithm}
\newtheorem*{algorithm*}{Algorithm}
\theoremstyle{remark}
\newtheorem{remark}[lemma]{Remark}
\newtheorem{remarks}[lemma]{Remarks}
\newtheorem{obs}[lemma]{Observation}
\theoremstyle{definition}
\newtheorem{defn}[lemma]{Definition}

  \def\hal{\unskip\nobreak\hfil\penalty50\hskip10pt\hbox{}\nobreak
  \hfill\vrule height 5pt width 6pt depth 1pt\par\vskip 2mm}

\renewcommand{\labelenumi}{(\roman{enumi})}
\newcommand{\Hom}{\mathrm{Hom}}
\newcommand{\Int}{\mathrm{int}}
\newcommand{\Ext}{\mathrm{Ext}}
\newcommand{\opH}{\mathrm{H}}
\newcommand{\D}{\mathscr{D}}
\newcommand{\soc}{\mathrm{Soc}}
\newcommand{\SO}{\mathrm{SO}}
\newcommand{\Sp}{\mathrm{Sp}}
\newcommand{\SL}{\mathrm{SL}}
\newcommand{\GL}{\mathrm{GL}}
\newcommand{\PGL}{\mathrm{PGL}}
\newcommand{\OO}{\mathcal{O}}
\newcommand{\Y}{\mathbf{Y}}
\newcommand{\X}{\mathbf{X}}
\newcommand{\diag}{\mathrm{diag}}
\newcommand{\End}{\mathrm{End}}
\newcommand{\tr}{\mathrm{tr}}
\newcommand{\Stab}{\mathrm{Stab}}
\newcommand{\red}{\mathrm{red}}
\newcommand{\Aut}{\mathrm{Aut}}
\renewcommand{\H}{\mathcal{H}}
\renewcommand{\u}{\mathfrak{u}}
\newcommand{\Ad}{\mathrm{Ad}}
\newcommand{\N}{\mathcal{N}}
\newcommand{\id}{\mathrm{id}}
\newcommand{\Z}{\mathbb{Z}}
\newcommand{\la}{\langle}\newcommand{\ra}{\rangle}
\newcommand{\gl}{\mathfrak{gl}}
\newcommand{\g}{\mathfrak{g}}
\newcommand{\F}{\mathbb{F}}
\newcommand{\m}{\mathfrak{m}}
\renewcommand{\b}{\mathfrak{b}}
\newcommand{\p}{\mathfrak{p}}
\newcommand{\q}{\mathfrak{q}}
\renewcommand{\l}{\mathfrak{l}}
\newcommand{\del}{\partial}
\newcommand{\h}{\mathfrak{h}}
\renewcommand{\t}{\mathfrak{t}}
\renewcommand{\k}{\mathfrak{k}}
\newcommand{\Gm}{\mathbb{G}_m}
\renewcommand{\c}{\mathfrak{c}}
\renewcommand{\r}{\mathfrak{r}}
\newcommand{\n}{\mathfrak{n}}
\newcommand{\s}{\mathfrak{s}}
\newcommand{\Q}{\mathbb{Q}}
\newcommand{\z}{\mathfrak{z}}
\newcommand{\pso}{\mathfrak{pso}}
\newcommand{\so}{\mathfrak{so}}
\renewcommand{\sl}{\mathfrak{sl}}
\newcommand{\psl}{\mathfrak{psl}}
\renewcommand{\sp}{\mathfrak{sp}}
\newcommand{\Ga}{\mathbb{G}_a}

\newenvironment{changemargin}[1]{%
  \begin{list}{}{%
    \setlength{\topsep}{0pt}%
    \setlength{\topmargin}{#1}%
    \setlength{\listparindent}{\parindent}%
    \setlength{\itemindent}{\parindent}%
    \setlength{\parsep}{\parskip}%
  }%
  \item[]}{\end{list}}

\parindent=0pt
\addtolength{\parskip}{0.5\baselineskip}

\subjclass[2010]{20G15}
\title{Irreducible modules for pseudo-reductive groups}

\author[M.\  Bate]{Michael Bate}
\address
{Department of Mathematics,
University of York,
York YO10 5DD,
United Kingdom}
\email{michael.bate@york.ac.uk}


\author{David I. Stewart}
\address{School of Mathematics and Statistics,
Herschel Building,
Newcastle,
NE1 7RU, UK}
\email{david.stewart@ncl.ac.uk}

\begin{abstract}
We classify the irreducible representations of smooth, connected affine algebraic groups over a field, by tackling the case of pseudo-reductive groups. We reduce the problem of calculating the dimension for pseudo-split pseudo-reductive groups to the split reductive case and the pseudo-split pseudo-reductive commutative case. Moreover, we give the first results on the latter, including a rather complete description of the rank one case.
\end{abstract}
\maketitle
\section{Introduction}
\label{sec:intro}
Let $k$ be a field and let $G$ be a smooth connected affine algebraic $k$-group. We are interested in the irreducible $k$-representations of $G$. Since the only irreducible representation of a unipotent $k$-group is the trivial representation $k$ itself, any normal unipotent subgroup of $G$ must act trivially on any irreducible representation of $G$. In particular, the $k$-unipotent radical $\RR_{u,k}(G)$---that is to say the largest smooth connected normal unipotent $k$-subgroup of $G$---acts trivially and so we may as well assume $\RR_{u,k}(G)=1$; in other words $G$ is a pseudo-reductive group. The main result of this paper is to classify the irreducible representations of $G$ in terms of those of a maximal torus, effectively completing a programme started in the fifties by Chevalley.

Pseudo-reductive groups have been the focus of a high degree of interest in recent years, due for the most part to the monograph \cite{CGP15} which gives a remarkably transparent structure theory. It says that almost all the time, $G$ is standard: that is, isomorphic to a certain type of systematic modification of Weil restrictions of connected reductive groups; the modification process involves changing a Cartan subgroup---which is typically far from a torus. For simplicity of exposition all our reductive groups are henceforth assumed to be connected.

When $G$ is reductive and split, the representation  theory of $G$ over arbitrary fields is rather extensive; the reader is referred to \cite{Jan03} to see this in all its glory, but we mention some highlights. Firstly, there is, due to Chevalley, a parametrisation of the simple representations by dominant weights, with such representations arising as the socles of certain universal induced modules which are defined over $\Z$---the latter have an elegant formula for their characters and dimensions courtesy of Weyl. If the characteristic of $k$ is $0$, these induced modules \textit{are} irreducible, but even when they are not, there are effective methods of calculating the characters of their simple socles in many cases, using the Anderson--Jantzen sum formula, and informed by the so-called alcove geometry induced by the affine Weyl group (in particular the Linkage Principle). These methods have been implemented algorithmically by Frank L\"ubeck \cite{Lub01} and thousands of characters (in arbitrary characteristic) are now available. Furthermore, when the characteristic is huge relative to the root system, it is a result of a number of authors that Lusztig's character formula holds, relating the characters of simple modules to those of the induced modules via Kazhdan--Lusztig polynomials; technically, this gives information only about the principal block, but the remaining characters can be deduced by use of Jantzen's translation functors and Steinberg's tensor product theorem. It should be mentioned that work of G.~Williamson \cite{Wil17} tells us that the characteristic must be at least exponential in the rank of simple factors for Lusztig's formula to hold, so that there remains a conceptual hole in the theory, but one which continues to be closed as time goes on; see \cite{RW18} for the latest developments, including a replacement conjecture.


The representation theory of split reductive groups is all predicated on the commutative case: a split reductive commutative group is simply a product of copies of the multiplicative group of the field and its representations are all well-known to be semisimple, being just sums of one-dimensional weight spaces. Since this is completely false in the context of commutative pseudo-reductive groups---and their classification is thought to be out of reach---it has been expected that their representation theory should be intractable. However, in the case where $G$ is pseudo-split---that is, it contains a split maximal torus---we are able to classify the simple representations by dominant weights and reduce the case of giving a dimension formula to understanding the commutative pseudo-reductive case together with the reductive case. 

The possibility for a breakthrough owes itself to the following crucial theorem, \cite[Thm.~3.4.6]{CGP15} (or the simpler proof of \cite[Thm.~5.4.4]{CPSurv}):
\begin{theorem}[Conrad--Gabber--Prasad] Let $G$ be a pseudo-split pseudo-reductive $k$-group with split maximal torus $T$. Then $G$ has a Levi $k$-subgroup $M$ containing $T$.
\end{theorem}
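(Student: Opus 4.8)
The plan is to produce $M$ as the $k$-subgroup of $G$ generated by $T$ and a single well-chosen one-dimensional $k$-subgroup $V_a\subseteq U_a$ of each root group, and then to recognise it as the split connected reductive $k$-group with the same root datum as the reductive quotient $G_{\bar k}/\RR_u(G_{\bar k})$. I would begin by recording the standard data attached to $T$: the (possibly non-reduced) root system $\Phi=\Phi(G,T)$, a basis $\Delta$ of a chosen positive system, the Cartan subgroup $C=Z_G(T)$ --- commutative pseudo-reductive with maximal torus $T$ --- the root groups $U_a$, and the open cell $\Omega=\bigl(\prod_{a\in\Phi^-}U_a\bigr)\times C\times\bigl(\prod_{a\in\Phi^+}U_a\bigr)$; one also uses that $\Phi$ is precisely the root system of $G_{\bar k}/\RR_u(G_{\bar k})$ relative to the image of $T_{\bar k}$, which is a maximal torus there. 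Since the product of $T$ with any $T$-stable Levi of $\D(G)$ is reductive (a torus normalising a semisimple group), split, of the expected dimension $\dim T+|\Phi|$, and meets $\RR_u(G_{\bar k})$ trivially, one may first reduce to the case that $G=\D(G)$ is pseudo-semisimple; this is a convenience rather than the crux.

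The second step is the case of semisimple rank one. Here $\Phi=\{a,-a\}$, $T=a^\vee(\Gm)$, and $G_a:=\langle U_a,U_{-a}\rangle$ is pseudo-split pseudo-reductive of rank one. Working in its open cell $U_{-a}\times(G_a\cap C)\times U_a$ and with a Weyl coset representative $n_a\in N_{G_a}(T)(k)$ extracted from that cell, one analyses the multiplication law to produce $k$-homomorphisms $x_{\pm a}\colon\Ga\to U_{\pm a}$ for which $M_a:=\langle T,x_a(\Ga),x_{-a}(\Ga)\rangle$ is a split $\SL_2$ or $\PGL_2$ with maximal torus $T$; put $V_{\pm a}:=x_{\pm a}(\Ga)$. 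Because $T$ acts on the vector group $U_a$ through the single character $a$, the line $V_a$ is automatically $T$-stable; the real content is finding the line for which $\langle T,V_a,V_{-a}\rangle$ is split reductive rather than something larger, and such a line exists over $k$ because the cell and all of its structure are defined over $k$.

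For general $\Phi$ one must choose the rank-one Levis $M_a$, $a\in\Delta$, coherently. By a Steinberg-type presentation of split connected reductive groups --- equivalently, because such a group is recovered from the birational group law on its open cell, whose defining relations involve only sub-root-systems of rank at most two --- it suffices to check, for each pair $a,b\in\Delta$, that inside the rank-two pseudo-split pseudo-reductive group $G_{a,b}:=\langle U_{\pm a},U_{\pm b}\rangle$ the lines $V_{\pm a},V_{\pm b}$ satisfy the Chevalley commutator relations, so that $\langle T,V_{\pm a},V_{\pm b}\rangle$ is a split rank-two Levi of $G_{a,b}$. A priori one knows only that $[V_a,V_b]$ lies in $\prod_{i,j>0}U_{ia+jb}$; using the structure of the rank-two pseudo-reductive groups that can occur (types $A_1\times A_1$, $A_2$, $B_2$, $G_2$, and the non-reduced cases that arise only in characteristic two) one shows that for a suitable admissible choice of the lines $V_{\pm a},V_{\pm b}$ the commutator in fact lands in $\prod_{i,j>0}V_{ia+jb}$ with the correct structure constants. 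Once $M=\langle T,V_{\pm a}:a\in\Delta\rangle$ is known to be split connected reductive with root system $\Phi$ and maximal torus $T$, it is a Levi: $M_{\bar k}$ has no nontrivial normal unipotent subgroup scheme, hence $M_{\bar k}\cap\RR_u(G_{\bar k})=1$ scheme-theoretically, and a dimension count then forces $M_{\bar k}\xrightarrow{\ \sim\ }G_{\bar k}/\RR_u(G_{\bar k})$.

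The main obstacle is exactly this coherence: a single rank-one Levi is not hard to write down, but it is pinned down only up to the freedom of selecting a different admissible line in the relevant root groups, and one must exhibit one family $(M_a)_{a\in\Delta}$ for which \emph{all} of the pairwise rank-two relations hold at once --- re-choosing $M_a$ to correct the pair $(a,b)$ must not spoil the pair $(a,c)$. This forces one to engage with the fine structure of rank-two pseudo-reductive groups (and, in characteristic two, with multipliable roots and non-abelian root groups), and it is there, rather than in the rank-one construction or the final base-change check, that the theorem acquires its depth.
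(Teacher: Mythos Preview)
The paper does not contain a proof of this theorem. It is stated in the introduction as a result quoted from the literature, with references \cite[Thm.~3.4.6]{CGP15} and \cite[Thm.~5.4.4]{CPSurv}; the paper then \emph{uses} the existence of a Levi $k$-subgroup as a black box throughout (beginning in Section~\ref{sec:exist}). There is therefore nothing in the paper against which to compare your argument.

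That said, your outline is a reasonable high-level sketch of the strategy actually used in \cite{CGP15}: build $M$ from $T$ together with a carefully chosen $1$-dimensional $k$-subgroup of each root group, settle the rank-one case by an explicit cell computation, and then verify coherence by reducing to rank-two subsystems. You have also correctly identified where the real difficulty lies, namely in making the rank-one choices compatible across all pairs of simple roots, with additional complications in characteristic $2$ coming from non-reduced root systems. One caution: your reduction to the pseudo-semisimple case is phrased as ``the product of $T$ with any $T$-stable Levi of $\D(G)$'', which presupposes a Levi of $\D(G)$ and so is not itself a reduction step unless you first observe that $\D(G)$ is again pseudo-split pseudo-reductive and that the problem for $G$ follows from the problem for $\D(G)$; this is true but should be stated that way round. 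Beyond that, your sketch is at the level of a proof \emph{plan} rather than a proof: the rank-one and rank-two analyses you allude to are substantial, and in \cite{CGP15} they occupy a considerable amount of structure theory that you would need to either reproduce or cite.
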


Recall that $M$ is a \emph{Levi subgroup} of $G$ if $M$ is reductive and $G_{\bar k}=M_{\bar k}\ltimes \RR_u(G_{\bar k})$, where $\RR_{u}(G_{\bar k})$ is the unipotent radical of $G_{\bar k}$. Our main theorem constructs a correspondence between the irreducible $G$-modules and the irreducible $M$-modules and reduces a description of the dimension of irreducible $G$-modules to that of $M$-modules and $C$-modules where $C$ is a Cartan subgroup of $G$. 

\begin{theorem}\label{maintheorem}
Let $G$ be a pseudo-split pseudo-reductive group with Cartan subgroup $C$ containing a split maximal torus $T$. Let $M$ be a Levi subgroup of $G$ containing $T$. Then the isomorphism classes of irreducible representations of $G$ are in $1$-$1$ correspondence with the dominant weights of $M$. If $X(T)_+$ denotes the set of dominant weights for $T\subseteq M$, then for $\lambda\in X(T)_+$ we denote by $L_G(\lambda)$ the corresponding irreducible representation. On restriction, $L_G(\lambda)$ is $M$-isotypic and semisimple. Furthermore,
\[\dim L_G(\lambda)=\dim L_M(\lambda)\cdot\dim L_C(\lambda).\]\end{theorem}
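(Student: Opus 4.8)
The plan is to exploit the structure $G_{\bar k} = M_{\bar k} \ltimes \RR_u(G_{\bar k})$ together with the existence of the Levi subgroup $M \supseteq T$ over $k$. First I would set up the correspondence: given $\lambda \in X(T)_+$, restrict the irreducible $M$-module $L_M(\lambda)$ along $M \hookrightarrow G$ is the wrong direction, so instead I would build an induction/coinduction functor. The natural candidate is to take the $C$-module structure into account: since $C$ is a Cartan subgroup containing $T$, and $M \cap C = T$ (as $C$ centralises $T$ and $M$ is reductive with maximal torus $T$), the multiplication map gives a useful factorisation. I expect the irreducible $G$-modules to arise as socles (or heads) of modules induced from a Borel-type subgroup $B = C \ltimes U$ of $G$, where $U = \RR_{u,k}$-part along positive roots, exactly mirroring the Chevalley construction; the one-dimensional characters of $T$ get replaced by the irreducible $C$-modules $L_C(\lambda)$, which need not be one-dimensional. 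So I would define $H_G(\lambda) = \Ind_B^G(L_C(\lambda))$ with $U$ acting trivially on $L_C(\lambda)$, show it has simple socle, and call that $L_G(\lambda)$.

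The key steps, in order: (1) Show every irreducible $G$-module $V$ has a highest weight: decompose $V|_T$ into weight spaces, pick a maximal weight $\lambda$ with respect to the dominance order coming from the Borel $B \supseteq T$, and show the $\lambda$-weight space $V_\lambda$ is $C$-stable (since $C$ centralises $T$ it permutes weight spaces, fixing each) and $U$-annihilated (since $U$ raises weights), hence $V_\lambda$ is a $C$-submodule; then show $V_\lambda$ is an \emph{irreducible} $C$-module — this uses that $V$ is generated over $G$ by $V_\lambda$ and a Frobenius-reciprocity/adjunction argument. (2) Conversely show $\Ind_B^G L$ for $L$ an irreducible $C$-module has irreducible socle with the expected highest weight, and that distinct dominant $\lambda$ give non-isomorphic socles (weight considerations). (3) Identify which $C$-modules occur: since $C/T$ is a commutative pseudo-unipotent-ish group and $C$ is itself pseudo-reductive commutative, its irreducibles are again parametrised by $X(T)_+$ restricted appropriately — I would match up the parametrisations so that $L_C(\lambda)$ is the piece sitting in $V_\lambda$. (4) For the restriction statement: over $\bar k$, $G_{\bar k} = M_{\bar k} \ltimes \RR_u$, and $\RR_u$ acts trivially on any irreducible (it is normal unipotent), so $L_G(\lambda)_{\bar k}$ is an $M_{\bar k}$-module; being generated by its highest weight space it must be $M$-isotypic. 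To get \emph{semisimplicity} of $L_G(\lambda)|_M$ I would argue that the socle as $M$-module is $G$-stable up to the $\RR_u$-action, which is trivial, hence $M$-semisimple $=$ $G$-stable, forcing the whole module to be the socle. (5) The dimension formula then falls out: $L_G(\lambda)|_M \cong L_M(\lambda)^{\oplus m}$ for some multiplicity $m$, and $m = \dim \Hom_M(L_M(\lambda), L_G(\lambda))$; I would identify this Hom-space with the highest-weight space $V_\lambda = L_C(\lambda)$ via the fact that the $M$-highest weight vectors in $L_G(\lambda)$ are exactly $V_\lambda$, giving $m = \dim L_C(\lambda)$ and hence $\dim L_G(\lambda) = \dim L_M(\lambda) \cdot \dim L_C(\lambda)$.

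The main obstacle I anticipate is step (1), specifically proving that the highest-weight space $V_\lambda$ is \emph{irreducible} as a $C$-module (not merely a nonzero $C$-module), and the dual fact that the induced module has \emph{simple} socle. In the split reductive case this is classical and uses the Bruhat decomposition plus the fact that $U^-$-orbits sweep out the module; here $C$ is genuinely non-toral, so weight spaces can be large and the interplay between the $C$-action and the $U^{\pm}$-actions is subtler. I would need a careful analysis of how $C$ normalises $U$ and $U^-$ (it does, since $C$ is a Cartan subgroup and these are the root groups), and a filtration argument on $\Ind_B^G L$ by $B^-$-orbits (or a vanishing result $\Ext^1_G(-,-)$ on the relevant weight-graded pieces) to pin down that the socle cannot exceed the $\lambda$-weight space. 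A secondary technical point is descent: several of these arguments are cleanest over $\bar k$, so I would need to check the relevant subspaces and Hom-spaces are defined over $k$, which should follow from Galois-equivariance since $T$, $C$, $M$, $B$ are all $k$-subgroups and $\lambda \in X(T)$ is a $k$-rational character by the pseudo-split hypothesis.
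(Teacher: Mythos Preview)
Your overall architecture---induce the irreducible $C$-module $L_C(\lambda)$ from a pseudo-Borel $B=C\ltimes U$ to $G$, show the resulting $H^0(\lambda)$ has simple socle whose $\lambda$-weight space has dimension $\dim L_C(\lambda)$, and read off the dimension formula once $M$-isotypicity is known---matches the paper's Section~4 almost exactly, including the use of the big cell $U^+B$ dense in $G$ to bound $\dim H^0(\lambda)^{U^+}$. So steps (1)--(3) and (5) are on the right track.

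The genuine gap is step~(4). You argue that over $\bar k$ the unipotent radical $\RR_u(G_{\bar k})$ acts trivially on $L_G(\lambda)_{\bar k}$ because it is normal unipotent and $L_G(\lambda)_{\bar k}$ is irreducible. But $L_G(\lambda)_{\bar k}$ is \emph{not} irreducible in general: it has $\dim L_C(\lambda)$ composition factors over $\bar k$, each isomorphic to $L_{M_{\bar k}}(\lambda)$, and $\RR_u(G_{\bar k})$ acts nontrivially, linking them into a uniserial module (the paper notes this explicitly in Remark~\ref{rems:notsemisimple}). So neither your isotypicity argument (``$\RR_u$ acts trivially, so it's an $M_{\bar k}$-module'') nor your semisimplicity argument (``the $M$-socle is $G$-stable because the $\RR_u$-action is trivial'') goes through. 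Your descent worry is also aimed in the wrong direction: the relevant extension is the minimal field of definition $k'$ of the radical, which is purely inseparable, so Galois descent is not available.

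The paper's repair is to introduce a \emph{second} induced module $Q_G(\lambda)=\Ind_M^G L_M(\lambda)$ alongside $H^0(\lambda)$. Base-changing $Q_G(\lambda)$ to $k'$ and using Frobenius reciprocity (with the fact that every simple $G_{k'}$-module has $\RR_{u,k'}(G_{k'})$ acting trivially) shows $Q_G(\lambda)$ has simple socle; this gives existence and uniqueness cleanly. Then, for isotypicity and semisimplicity, the paper argues by contradiction via duality: if some $L_M(\mu)$ with $\mu<\lambda$ sat in the $M$-socle (or $M$-head) of $L_G(\lambda)$, dualising and applying Frobenius reciprocity would produce a nonzero $G$-map $L_G(-w_0\lambda)\to Q_G(-w_0\mu)$, contradicting the simple-socle property of $Q_G$. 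This bypasses any claim about the $\RR_u$-action on $L_G(\lambda)_{\bar k}$ entirely.
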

For the dimension formula, note that since a Cartan subgroup $C$ contains a split maximal torus $T$ which must be its Levi subgroup, the first part of the theorem guarantees a representation $L_C(\lambda)$ unique up to isomorphism for any weight $\lambda\in X(T)$.

As mentioned above, a complete description of $\dim L_M(\lambda)$ is  thought to be out of reach for $p$ small compared to the root system of $M$, though at least there are algorithms that in principle compute any given example. By contrast, there are no results at all on $\dim L_C(\lambda)$. We can give a formula for the latter in the case $C=\R_{k'/k}(\Gm)$ for $k'$ a finite non-zero reduced purely inseparable $k$-algebra (Theorem \ref{thm:anykalgebra}). Here is the simpler version of this theorem when $k'$ is a purely inseparable field extension. In order to state the result, we need some notation: 
let $k'/k$ be a purely inseparable extension of fields of degree $q=p^r$ and let $\lambda \in\Z$.
Then we let $k'(\lambda)$ denote the subfield of $k'$ generated by $k$ and 
$(k')^\lambda$.
Note that if $\lambda$ is coprime to $p$ and $k'/k$ is purely inseparable, then the kernel of the 
group homomorphism 
$x\mapsto x^\lambda$ on $(k')^*$ is contained in $k$.
The fundamental theorem of homomorphisms now implies that any element of $(k')^*$ lies in the product of $k$ and the image of this map, so in this case $k'(\lambda) = k'$.
More generally, one can see that if we write $\lambda = p^{\nu_p(\lambda)}\mu$ with $\mu$ coprime to $p$ then $k'(\lambda) = k'(p^{\nu_p(\lambda)})$. 

\begin{theorem}\label{gmfield}
Let $k'/k$ be a purely inseparable extension of fields of degree $q=p^r$ and let $C=\R_{k'/k}(\Gm)$. 
Then for any $\lambda \in \Z$ we have \[\dim(L_C(\lambda))=[k'(\lambda):k].\]
\end{theorem}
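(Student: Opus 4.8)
We want to prove that for $C = \R_{k'/k}(\Gm)$ with $k'/k$ purely inseparable of degree $q = p^r$, the irreducible $C$-module $L_C(\lambda)$ has dimension $[k'(\lambda):k]$. The starting point is that $C$ is a commutative pseudo-reductive group with split maximal torus (hence Levi subgroup) $\Gm$, so by Theorem~\ref{maintheorem} there is a unique irreducible $L_C(\lambda)$ for each $\lambda \in X(\Gm) = \Z$, and it is $\Gm$-isotypic of weight $\lambda$. So $L_C(\lambda)$ really is determined by the action of the ``extra'' infinitesimal/unipotent part of $C$, and the whole content is to compute its dimension.

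**Step 1: reduce to a concrete module over the distribution algebra, or to coinvariants.** Since $\Gm$ acts by the single character $\lambda$ on $L_C(\lambda)$, the representation factors through the quotient recording how the commutative group $C$ acts on a $\lambda$-weight space; concretely I would look at $C$ acting on $k' $ (or on $(k')^*$-style data) via the natural $\Gm$-action twisted by $\lambda$. The cleanest route is via the Hopf algebra $k[C]$ of $C = \R_{k'/k}(\Gm)$: writing $k' = k[C]$ has underlying algebra obtained by Weil restriction, $k[C] = k'[t, t^{-1}]$ viewed as a $k$-algebra of dimension controlled by $q$, and its comultiplication is the Weil restriction of that of $\Gm$. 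I would identify the minimal $C$-submodule generated by a highest-weight vector inside an induced/coinduced module $\Ind_{\Gm}^C(k_\lambda)$ (or dually, the socle), exactly as in the proof of Theorem~\ref{maintheorem}, and then compute its dimension.

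**Step 2: diagonalize the $\Gm$-action and peel off powers of $p$.** The key algebraic mechanism is visible from the discussion preceding the theorem: the map $x \mapsto x^\lambda$ on $(k')^\times$ has kernel inside $k$ when $p \nmid \lambda$, forcing $k'(\lambda) = k'$; and writing $\lambda = p^{\nu_p(\lambda)} \mu$ with $p \nmid \mu$ gives $k'(\lambda) = k'(p^{\nu_p(\lambda)}) = k \cdot (k')^{p^{\nu_p(\lambda)}}$. So the dimension $[k'(\lambda):k]$ only depends on $\lambda$ through $\nu_p(\lambda)$, and it equals the degree of the subfield $k((k')^{p^m})$ over $k$ where $m = \min(\nu_p(\lambda), r)$. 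I would show that the $C$-module $L_C(\lambda)$ is built from the Frobenius-twist structure: the part of $C$ acting nontrivially on the weight-$\lambda$ line is an infinitesimal group whose action factors through the $p^m$-th Frobenius, precisely because multiplying a $\Gm$-coweight by $\lambda$ kills the $p^m$-torsion in the relevant infinitesimal group scheme. Making this factorization precise — matching the ``$p^{\nu_p(\lambda)}$ vs.\ $p^r$'' cutoff in $k'(\lambda)$ against the height filtration of $\ker(\R_{k'/k}(\Gm) \to \Gm)$ — is the crux.

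**Step 3: assemble the dimension count.** Once Step~2 identifies $L_C(\lambda)$ with (the socle of) a module induced from $\Gm$ up along the height-$m$ infinitesimal quotient of $C$, the dimension is the $k$-dimension of the coordinate ring of that quotient group scheme, which is exactly $k[(k')^{p^m}\!\cdot k / \cdots]$ — i.e. $[k'(\lambda):k]$. The main obstacle, I expect, is Step~2: carefully pinning down which sub-quotient of the Weil-restricted Hopf algebra $k[\R_{k'/k}(\Gm)]$ governs the $\lambda$-weight space, and proving that the socle of the induced module has dimension exactly (not merely at most) $[k'(\lambda):k]$ — the lower bound, showing the module is not smaller, is the delicate half and will likely use that $x \mapsto x^\lambda$ is surjective onto $k' \cdot (k')^{p^{\nu_p(\lambda)}}$ together with a non-degeneracy (faithfulness up to the stated Frobenius kernel) argument. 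The upper bound should follow more directly from the factorization of the action through the $p^m$-th Frobenius. Boundary cases ($\lambda = 0$, giving the trivial module of dimension $1$; and $\nu_p(\lambda) \ge r$, giving $k'(\lambda) = k$ again) should fall out as sanity checks.
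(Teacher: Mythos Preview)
Your plan is in the right general direction---reduce to the $p$-adic valuation of $\lambda$ and use Frobenius to pass to smaller Weil restrictions---but it diverges from the paper's argument in a way that leaves a real gap.

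The paper does \emph{not} work with socles of induced modules or infinitesimal height filtrations. Instead it exhibits a concrete candidate for $L_C(\lambda)$ and proves it is irreducible directly. The candidate is the natural module $S$ for $\R_{K/k}(\Gm)$, where $K=k'(\lambda)$: this is just the $k$-vector space $K$ with $\R_{K/k}(\Gm)(k)=K^\times$ acting by multiplication, pulled back along an explicit homomorphism $C\to \R_{K/k}(\Gm)$ built from the Frobenius $\R_{k'/k}(F^{\nu_p(\lambda)})$ composed with the $\mu$-power map (where $\lambda=p^{\nu_p(\lambda)}\mu$). Irreducibility of $S$ as an $\R_{K/k}(\Gm)$-module is immediate, since $K^\times$ acts transitively on $K\setminus\{0\}$. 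The only subtle point---and this is exactly the ``lower bound'' you flag as delicate---is that $S$ remains irreducible after restriction along the map from $C$. The paper handles this by a group-algebra argument on $k$-points: the image of $C(k)$ under the composite map generates the same $k$-group-algebra inside $\End_k(K)$ as $\R_{K/k}(\Gm)(k)$ itself does (because $\alpha_1^p,\dots,\alpha_q^p$ span $K$ over $k$), and $k$-points are dense, so irreducibility transfers. The case $p\nmid\lambda$ is treated separately by showing the twist functor $\sigma_\lambda^*$ is a Morita equivalence, with an explicit inverse built from the one-dimensional character $\hat d$ arising from the $p^e$-power map.

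The gap in your Step~3 is the claim that ``the dimension is the $k$-dimension of the coordinate ring of that quotient group scheme.'' That is the dimension of the \emph{induced} module, not of its socle; the induced module $\Ind_{\Gm}^C(k_\lambda)$ is typically infinite-dimensional (see the remark in \S\ref{blocks} that $k[G/\Gm]$ is indecomposable and infinite-dimensional), and even after cutting to the correct Frobenius quotient you would still need an independent argument that the socle fills out the whole $[k'(\lambda):k]$-dimensional space. Your suggested mechanism (``surjectivity of $x\mapsto x^\lambda$ onto $k\cdot(k')^{p^{\nu_p(\lambda)}}$'') is morally the right ingredient, but making it into a proof of irreducibility amounts to the paper's group-algebra argument---and once you have that argument, the induced-module scaffolding is unnecessary.
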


This is a good moment to point out that if $G$ is a general pseudo-split pseudo-reductive group, most simple $G$-modules are not absolutely irreducible, in contrast with the split reductive case. If $V$ is an absolutely irreducible $G$-module then $V_{\bar k}$ is an irreducible $G_{\bar k}$-module and the Lie--Kolchin theorem implies that $\RR_u(G_{\bar k})$ must act trivially. For example, if $G=C$ is as in the theorem above, we have $G_{\bar k}\cong \Gm\times \RR_u(G_{\bar k})$, so that the simple $G_{\bar k}$-modules are $1$-dimensional, whereas this is true of simple $G$-modules if and only if $k'(\lambda)=k$. Indeed, for any pseudo-split, pseudo-reductive $G$, unless $\dim(L_C(\lambda))=1$, we have $L_G(\lambda)$ is not even absolutely semisimple, though it is absolutely indecomposable, since the socle of $L_G(\lambda)_{k'}$ is still irreducible for any extension of fields $k'/k$; see Remark \ref{abssemi} below.

For split reductive $G'$, as the action of $G'$ on $L_{G'}(\lambda)$ factors through the Frobenius map $F^{\nu_p(\lambda)}$, we have that $\R_{k'(\lambda)/k}(L_{G'}(\lambda))$ furnishes us with a $G$-module with the correct highest weight and the right dimension. Furthermore, the Weil restriction of a reductive $k'$-group $G'$ across a field extension $k'/k$ is pseudo-split if and only if $G'$ is split and $k'/k$ is purely inseparable (see \cite[A.5.15]{CGP15}). Hence, the following consequence of Theorem \ref{gmfield} is immediate.

\begin{cor} Let $k'/k$ be a purely inseparable extension of fields. If $G=\R_{k'/k}(G')$ for $G'$ split reductive, then $L_G(
\lambda)=\R_{k'(\lambda)/k}(L_{G'}(\lambda))$.\end{cor}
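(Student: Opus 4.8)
The plan is to derive this corollary directly from Theorem~\ref{gmfield}, Theorem~\ref{maintheorem}, and the behaviour of Weil restriction on modules, without fresh hard work. First I would recall that, by \cite[A.5.15]{CGP15}, since $k'/k$ is purely inseparable and $G'$ is split, $G=\R_{k'/k}(G')$ is pseudo-split pseudo-reductive, with split maximal torus $T$ obtained as (the maximal split subtorus of) $\R_{k'/k}(T')$ for a split maximal torus $T'\subseteq G'$, and with Levi subgroup $M\cong G'$ in the sense required by Theorem~\ref{maintheorem}; the Cartan subgroup is $C=\R_{k'/k}(T')$, which is a product of copies of $\R_{k'/k}(\Gm)$ indexed by a basis of $X(T')$. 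Thus the weight lattices of $G$ and of $M=G'$ are naturally identified, and a dominant weight $\lambda$ for $G$ is the same datum as a dominant weight for $G'$.

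Next I would verify that $V:=\R_{k'(\lambda)/k}(L_{G'}(\lambda))$ is indeed an irreducible $G$-module of highest weight $\lambda$. The key input is that, for split reductive $G'$, the action of $G'$ on $L_{G'}(\lambda)$ factors through the $\nu_p(\lambda)$-th Frobenius $F^{\nu_p(\lambda)}\colon G'\to G'^{(p^{\nu_p(\lambda)})}$, hence $L_{G'}(\lambda)$ is already defined over the subfield $k'(\lambda)=k'(p^{\nu_p(\lambda)})$ of $k'$; applying $\R_{k'(\lambda)/k}(-)$ then produces a representation of $\R_{k'(\lambda)/k}(G')$, which receives a map from $\R_{k'/k}(G')=G$ via the canonical $k$-homomorphism $\R_{k'/k}(G')\to\R_{k'(\lambda)/k}(G')$ induced by restriction of scalars along $k\hookrightarrow k'(\lambda)\hookrightarrow k'$. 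So $V$ is a $G$-module. Its highest weight is $\lambda$: the composite $T\hookrightarrow C=\R_{k'/k}(T')\to\R_{k'(\lambda)/k}(T')$ followed by the weight-$\lambda$ character of $T'$ over $k'(\lambda)$ recovers $\lambda$ on $T$, because the factorisation through Frobenius means $\lambda$ is trivial on the kernel $\R_{k'/k}(\Gm)\to\R_{k'(\lambda)/k}(\Gm)$. For irreducibility, one checks that $V$ has no proper nonzero $G$-submodule: a $G$-submodule is in particular a $\R_{k'(\lambda)/k}(G')$-submodule after base change considerations, and since $L_{G'}(\lambda)$ is irreducible as a $G'_{k'(\lambda)}$-module and $k'(\lambda)/k$ is purely inseparable, $\R_{k'(\lambda)/k}(L_{G'}(\lambda))$ has irreducible socle with the property that any nonzero $k$-subspace stable under the (dense image of the) group contains it; more simply, one may invoke Theorem~\ref{maintheorem}, which says the irreducible $G$-module of highest weight $\lambda$ is unique, so it suffices to exhibit $V$ as \emph{some} irreducible $G$-module of that highest weight.

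Finally, I would pin down $V=L_G(\lambda)$ by a dimension count. On the one hand $\dim_k V=[k'(\lambda):k]\cdot\dim_k L_{G'}(\lambda)=\dim L_C(\lambda)\cdot\dim L_M(\lambda)$ by Theorem~\ref{gmfield} (applied to each $\R_{k'/k}(\Gm)$-factor of $C$, or directly to the relevant character) together with $M\cong G'$. On the other hand Theorem~\ref{maintheorem} gives $\dim L_G(\lambda)=\dim L_M(\lambda)\cdot\dim L_C(\lambda)$. Hence $\dim_k V=\dim_k L_G(\lambda)$; and since $V$ is an irreducible $G$-module with highest weight $\lambda$, and $L_G(\lambda)$ is the \emph{unique} such, we conclude $V\cong L_G(\lambda)$, i.e. $L_G(\lambda)=\R_{k'(\lambda)/k}(L_{G'}(\lambda))$.

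The main obstacle is the bookkeeping in the second paragraph: one must be careful that the natural $k$-homomorphism $G=\R_{k'/k}(G')\to\R_{k'(\lambda)/k}(G')$ exists and is the ``right'' one (so that it restricts correctly on $T$ and $C$, identifying weights), and that irreducibility of $L_{G'}(\lambda)$ over $k'(\lambda)$ really does force irreducibility of its Weil restriction as a $G$-module rather than merely as a $\R_{k'(\lambda)/k}(G')$-module. Both points are handled cleanly once one notes that Weil restriction along a purely inseparable extension is fully faithful on module categories up to the socle, so the dimension argument combined with the uniqueness in Theorem~\ref{maintheorem} closes the gap without delicate extra estimates.
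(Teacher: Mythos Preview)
Your approach is essentially the paper's: exhibit $\R_{k'(\lambda)/k}(L_{G'}(\lambda))$ as a $G$-module of highest weight $\lambda$ via the Frobenius factorisation, then invoke Theorems~\ref{gmfield} and~\ref{maintheorem} to match its dimension with $\dim L_G(\lambda)$. Your separate attempt to argue irreducibility of $V$ is unnecessary and a bit circular; the clean finish (which the paper also leaves implicit) is that having highest weight $\lambda$ forces $L_G(\lambda)$ to occur as a composition factor of $V$, so the dimension equality alone already yields $V\cong L_G(\lambda)$.
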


We finish this introduction by returning to the general question of classifiying the irreducible representations for 
an arbitrary smooth connected affine algebraic $k$-group.
First recall that the paper \cite{Tit71} describes how to relate the irreducible representations for a non-split reductive $k$-group $G$ to a split reductive group $G_{K}$ via Galois cohomology, where $K/k$ is an appropriate Galois extension. One associates isomorphism classes of irreducible representations to orbits of the Galois group $\Gamma=\mathrm{Gal}(K/k)$ on the dominant weights of a maximal torus $T$ of $G_{K}$.
The same programme is straightforward to apply in our situation, reducing the classification of irreducible modules for general pseudo-reductive groups to the pseudo-split case, and giving rise to the following theorem.

\begin{theorem}
Let $G$ be a smooth connected affine algebraic $k$ group, and let $G' = G/\RR_{u,k}(G)$ be its maximal pseudo-reductive quotient. 
Given a maximal torus $T'$ of $G'$, there is a finite Galois extension $K/k$ so that $G'_K$ is pseudo-split with split maximal torus $T'_K$.
The Galois group $\Gamma = \mathrm{Gal}(K/k)$ acts on the dominant weights $X(T'_K)_+$, and 
there is a one-one correspondence between $\Gamma$-orbits in $X(T'_K)_+$ and isomorphism classes of irreducible representations of $G$.

Moreover, if $V$ is an irreducible representation of $G$ corresponding to the $\Gamma$-orbit $\{\lambda_1,\ldots,\lambda_r\}$, then $V_K$
decomposes as a direct sum of the irreducible $G'_K$-modules $L_{G'_K}(\lambda_i)$,
each of which appears with the same multiplicity.
\end{theorem}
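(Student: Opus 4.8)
The plan is to reduce everything to the pseudo-split case (Theorem \ref{maintheorem}) via Galois descent, following the template of \cite{Tit71}. Since irreducible unipotent representations are trivial, replacing $G$ by $G' = G/\RR_{u,k}(G)$ does not change the irreducible representations, so I may assume $G = G'$ is pseudo-reductive. Given a maximal torus $T'$ of $G'$, a maximal torus becomes split over a finite separable extension of $k$, and since the relevant splitting data are all separable (the torus, the root datum, the isomorphisms realising the Levi), there is a finite Galois extension $K/k$ with $G'_K$ pseudo-split with split maximal torus $T'_K$; this is essentially \cite[Thm.~1.1.9 or similar]{CGP15} combined with the observation that pseudo-splitness is a condition that descends from $\bar k$ to a finite separable layer.

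**The correspondence.**

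First I would set up the action of $\Gamma = \mathrm{Gal}(K/k)$ on $X(T'_K)_+$: $\Gamma$ acts on $X(T'_K)$ through its action on $K$-points, and this action permutes the roots, hence preserves a choice of positive system up to the Weyl group, so one obtains a well-defined $\ast$-action of $\Gamma$ on $X(T'_K)_+$ (the twist by Weyl group elements needed to restore dominance, exactly as in \cite{Tit71}). Next, given a $\Gamma$-orbit $\{\lambda_1,\dots,\lambda_r\}$, I would form the $G'_K$-module $\bigoplus_i L_{G'_K}(\lambda_i)$ and show it carries a semilinear $\Gamma$-action compatible with the $G'_K$-action, descending to a $K/k$-form; here the key input is that $\sigma \in \Gamma$ carries $L_{G'_K}(\lambda_i)$ isomorphically to $L_{G'_K}(\sigma\cdot\lambda_i)$, so the direct sum over the orbit is $\Gamma$-stable, and a cocycle/descent argument (Speiser's theorem, $H^1(\Gamma, \GL_n) = 1$) produces a $k$-form $V$. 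Conversely, given an irreducible $G$-module $V$, base-changing to $K$ and using that $G'_K$ is pseudo-split, one decomposes $V_K$ into its $G'_K$-isotypic pieces; $\Gamma$ permutes these transitively (else $V$ would not be irreducible over $k$), so they are indexed by a single $\Gamma$-orbit, and each appears with the same multiplicity since $\Gamma$ acts transitively and by automorphisms. Irreducibility of $V$ forces this multiplicity analysis through via a standard double-centraliser / endomorphism-algebra argument: $\End_G(V)$ is a division algebra, $\End_{G'_K}(V_K)$ is a matrix algebra over it permuted by $\Gamma$, and $V$ irreducible $\iff$ $\Gamma$ acts transitively on the simple summands of $V_K$.

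**The main obstacle and how to handle it.**

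The technical heart—and the step I expect to be the main obstacle—is showing that $\Gamma$ genuinely permutes the $G'_K$-composition factors of $V_K$ \emph{transitively} and that the semilinear action can be chosen $G'_K$-equivariantly, i.e.\ that the descent obstruction vanishes. In the classical reductive setting of \cite{Tit71} this rests on $L_{G_K}(\lambda)$ being absolutely irreducible, so that each summand appears with multiplicity one in the semisimple module $V_K$ and $\mathrm{End}$ is as small as possible. In our situation, as the excerpt stresses, $L_{G'_K}(\lambda_i)$ is typically \emph{not} absolutely irreducible (its endomorphism algebra can be a purely inseparable field extension, as in Theorem \ref{gmfield}), so $V_K$ need not be semisimple as a $G'_K$-module and the summands may appear with multiplicity. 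I would circumvent this by working not with $V_K$ directly but with $V_{\bar k}$, or better, by using the structure established in Theorem \ref{maintheorem}: since each $L_{G'_K}(\lambda_i)$ restricted to a Levi is $M$-isotypic and has irreducible socle over any field extension (Remark \ref{abssemi}), the module $V_K$ has a canonical socle filtration whose layers $\Gamma$ must permute, and transitivity on the socle follows from irreducibility of $V$. The "same multiplicity" claim then follows because $\Gamma$ acts by ring automorphisms on $\End_G(V) \otimes_k K$ and transitively on the central idempotents cutting out the isotypic pieces, so the pieces are abstractly isomorphic as $K$-vector spaces with $G'_K$-action up to the Galois twist; hence equal dimension and equal multiplicity of each $L_{G'_K}(\lambda_i)$. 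Once transitivity and the cocycle-splitting ($H^1(\Gamma,\mathrm{Aut})$ vanishing, using that the automorphism functor is smooth since we are in the separable setting) are in hand, the two constructions are mutually inverse, giving the bijection and the stated decomposition of $V_K$.
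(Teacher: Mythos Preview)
The paper does not actually supply a proof of this theorem: it simply asserts that Tits' programme from \cite{Tit71} ``is straightforward to apply in our situation'' once the pseudo-split classification (Theorem~\ref{maintheorem}/\ref{existanduniq}) is in hand. Your overall strategy---pass to $G'$, split $T'$ over a finite Galois extension $K/k$, set up the $*$-action of $\Gamma$ on $X(T'_K)_+$, and run Galois descent---is precisely that programme, so at the level of architecture your proposal agrees with what the paper intends.

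Where you go astray is in your ``main obstacle''. You worry that $V_K$ may fail to be semisimple because the $L_{G'_K}(\lambda_i)$ are not absolutely irreducible, invoking Remark~\ref{abssemi}. But that remark is about base change across the \emph{purely inseparable} extension $k'/k$ defining the geometric unipotent radical, whereas $K/k$ here is \emph{separable} (indeed Galois). For $V$ a simple $G$-module and $K/k$ finite Galois, $V_K$ is automatically semisimple: the socle $\soc_{G'_K}(V_K)$ is stable under the semilinear $\Gamma$-action (which permutes simple $G'_K$-submodules), hence descends to a nonzero $G$-submodule of $V$, hence equals $V_K$. The decomposition asserted in the theorem is therefore immediate, with $\Gamma$ permuting the isotypic components; transitivity and equal multiplicity then follow exactly as in \cite{Tit71}. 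Your proposed workaround---passing to $V_{\bar k}$ or to socle filtrations---is unnecessary, and the first option would in fact reintroduce the inseparable pathologies you are trying to avoid. Once you drop this detour, the rest of your outline is correct.
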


\section{Preliminaries}

We collect some basic material in this section. Our main references for the theory of algebraic groups are \cite{CGP15} and \cite{Jan03} and our notation will be kept consistent with those monographs. In particular all rings are commutative and unital.

For $k$ a ring, let $G$ be an affine algebraic $k$-group, that is a functor $\underline{k\mathrm{-Alg}}\to \underline{\mathrm{Grp}}$ which is represented by a finitely presented $k$-algebra $k[G]$, in other words $G(?)\cong \Hom_{k\mathrm{-Alg}}(k[G],?)$. Note that we do not insist that algebraic groups be smooth.

\subsection{$G$-modules}\label{sec:Gmods} Let $M$ be a $k$-module (possibly infinite-dimensional). Then we may define a group functor $M_a:\underline{k\mathrm{-Alg}}\to \underline{\mathrm{Grp}}$ so that $M_a(A)=M\otimes_k A$ inherits a group structure from the additive group on $A$. Note that, even when $k$ is a field, $M_a$ is only an algebraic group when $M$ is finite-dimensional. Recall that an action of $G$ on a $k$-functor $X$ is a morphism (i.e. a natural transformation) $\phi:G\times X\to X$ such that $\phi(A):G(A)\times X(A)\to X(A)$ is an action of the group $G(A)$ on $X(A)$ for each $k$-algebra $A$. In case $G$ acts on $M_a$ such that the action of $G(A)$ on $M(A)$ is $A$-linear for each $k$-algebra $A$, we say $M$ is a \emph{representation for $G$}, or more frequently in this paper, a \emph{$G$-module}. Equivalently, one may use the Hopf algebra structure on $k[G]$ to define a $G$-module $M$ to be a comodule for $k[G]$. These  definitions permit the possibility of working with infinite-dimensional modules, though if $V$ is a finite-dimensional $G$-module then it corresponds to a homomorphism $G\to \GL(V)$ of algebraic groups. Of course, if $k\to k'$ is a homomorphism of rings and $M$ is a $G$-module then $M_{k'}:=M\otimes k'$ acquires an action of the base change $G_{k'}$ of $G$ making it into a $G_{k'}$-module.

\begin{remark}If $G$ is smooth and $k$ is an algebraically closed field then one may more straightforwardly define a $G$-module to be a vector space $M$ over $k$ on which $G(k)$ acts rationally through $k$-linear maps. Here to act rationally means that if $g\in G(k)$ and $(v_i)_{i\in I}$ is a basis for $M$ then $g.v_i=\sum_{j\in J}f_{ji}(g)v_i$ for $f_{ji}\in k[G]$ with cofinitely many of the $f_{ij}$ being zero.\end{remark}

The collection of $G$-modules forms a category $G$-$\Mod$, with morphisms being $G$-equivariant $k$-linear maps. If $M$ and $M'$ are $G$-modules, the full collection of such morphisms is written $\Hom_{G}(M,M')$. An important fact \cite[I.2.10(7)]{Jan03} is that the $\Hom_G$ bifunctor commutes with base change across flat extensions, i.e. \begin{equation}\label{hombase}\Hom_G(V,W)\otimes k'\cong \Hom_{G_{k'}}(V\otimes k',W\otimes k').\end{equation}

If $G$ is flat, then it is an immediate consequence of the definitions that all $G$-modules are \emph{locally finite}, that is to say that for any $m\in M(k)$ there is a unique minimal finitely generated submodule $G$-submodule $kGm$ of $M$ containing $m$. It follows that all simple $G$-modules over a field are finite-dimensional. Furthermore, the category of $G$-modules is abelian.

One may consult \cite[\S I.2]{Jan03} for more details.

\subsection{Representations of $\Gm$} The $\Z$-defined group scheme $\Gm$ is the functor $\underline{\mathrm{Rng}}\to\underline{\mathrm{Grp}}$ which returns the group of units $R^\times$ of any ring $R$. (It is represented by the algebra $\Z[t,t^{-1}]$.) Let $k$ be a ring and let $W$ be a non-zero $(\Gm)_k$-module. If there is $\lambda\in\Z$ such that $a\cdot w=a^\lambda w$ for any $k$-algebra $A$, $w\in W(A)$ and $a\in(\Gm)_k(A)=A^\times$, then we say $W$ is a \emph{weight module} (of weight $\lambda$). More typically $k$ will be a field and so $W(k)$ will be a vector space over $k$, in which case we refer to it as a \emph{weight space} (of weight $\lambda$). By \cite[I.2.14(4)]{Jan03}, any $(\Gm)_k$-module $V$ is semisimple, breaking into a sum of $1$-dimensional irreducible weight spaces; the resulting weights are referred to as the weights of $V$. If $\lambda$ is a weight of $V$ then $V_\lambda$ is the sum of all submodules of $V$ which are weight modules of weight $\lambda$. When $k$ is a field, then an irreducible representation is a $1$-dimensional weight space. We denote by $k_\lambda$ a $1$-dimensional weight space of weight $\lambda$. We will usually abuse notation by identifying the character group $X(\Gm)$ with $\Z$.

In many cases it will be simpler to consider $\Gm$ as a $k$-group over some ring $k$ (which will usually be a field), in which case we will just write $\Gm$ in place of $(\Gm)_k$.

\subsection{Representations of unipotent groups} In this section let $k$ be a field. Recall that a $k$-group $U$ is \emph{unipotent} if it is isomorphic to a closed subgroup of the group $U_n$ of strictly upper triangular $n\times n$ matrices over $k$ for some $n$. In the case that $k$ is a field of characteristic $p$ and $U$ is a smooth $k$-group, for $U$ to be unipotent, it suffices for there to be some $e$ such that the $p^e$-map on $U$ factors through the identity. In order to apply the Lie--Kolchin theorem to a solvable group $G$, one needs $G$ to be split (which happens in the case $k$ is algebraically closed), but when $G$ is a unipotent $k$-group one can show, \cite[Exp.XVII, Prop.~3.2]{SGA3}:
\begin{prop}\label{unipsimp}Let $U$ be any unipotent $k$-group. Then the only simple $U$-module is the $1$-dimensional trivial module, $k$.\end{prop}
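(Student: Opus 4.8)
The plan is to reduce the proposition to an Engel‐type statement — that every nonzero finite‐dimensional $U$‐module has a nonzero fixed vector — and then to prove that statement by dévissage. For the reduction, recall that over a field $U$ is flat, so every simple $U$‐module is finite‐dimensional; hence it is enough to show that $V^U := \Hom_U(k,V)\neq 0$ for every nonzero finite‐dimensional $U$‐module $V$ (call this property $(\star)$). Indeed, granting $(\star)$ and taking $V$ simple, $V^U$ is a nonzero $U$‐submodule, hence all of $V$; so $U$ acts trivially, every $k$‐subspace of $V$ is then a submodule, and simplicity forces $\dim_k V=1$. Finally, $V^U=\Hom_U(k,V)$ commutes with the faithfully flat base change $k\to\bar k$ by \eqref{hombase}, so $(\star)$ for $U$ over $k$ follows from $(\star)$ for $U_{\bar k}$ over $\bar k$; I may therefore assume $k$ algebraically closed.

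Next I would run the dévissage. A unipotent group over an algebraically closed (hence perfect) field admits a composition series $1=U_0\triangleleft U_1\triangleleft\cdots\triangleleft U_m=U$ of closed subgroup schemes whose successive layers $U_i/U_{i-1}$ are each isomorphic to $\Ga$, to $\alpha_p$, or to $\Z/p$; this comes from the embedding $U\hookrightarrow U_n$ of the definition, by intersecting $U$ with the standard central filtration of $U_n$ (whose layers are vector groups) and then breaking the resulting commutative subgroup schemes of vector groups into composition series of their own, and it is packaged by the structure theory of unipotent (and of finite commutative) group schemes; see \cite[Exp.~XVII]{SGA3}. I then prove $(\star)$ by induction on $m$: for $m\ge 1$ put $N=U_{m-1}$, a closed normal subgroup of $U$ with $U/N$ one of $\Ga,\alpha_p,\Z/p$, while $N$ carries the shorter series $U_0\triangleleft\cdots\triangleleft U_{m-1}$. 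Given nonzero finite‐dimensional $V$, the inductive hypothesis applied to $N$ gives $V^N\neq 0$; since $N$ is normal in $U$, the space $V^N$ is a nonzero $U/N$‐module, so the base case applied to $U/N$ gives $(V^N)^{U/N}\neq 0$, and $(V^N)^{U/N}=V^U$. Thus everything reduces to the three base cases $U\in\{\Ga,\alpha_p,\Z/p\}$.

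For $U=\Ga$ this is a connected solvable smooth group, so by the Lie–Kolchin theorem it is trigonalizable on $V$, and being unipotent all the diagonal entries are $1$, so it fixes a nonzero vector (concretely, $\Ga(k)=k^{+}$ acts through commuting unipotent operators, which have a common fixed vector). For $U=\alpha_p$ or $U=\Z/p$, finite‐dimensional $U$‐modules are the same as finite‐dimensional modules over the linear dual $k[U]^{\vee}$, which in characteristic $p$ is respectively $k[y]/(y^p)$ or the group algebra $k[g]/(g^p-1)=k[g]/((g-1)^p)$ — in each case a local ring whose maximal ideal $\m$ (the augmentation ideal, whose vanishing on a vector characterises $V^U$) is nilpotent; choosing $j$ maximal with $\m^jV\neq 0$ then gives $0\neq\m^jV\subseteq V^U$. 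The step I expect to be the real obstacle is not this formal dévissage but producing the composition series with $\Ga/\alpha_p/\Z/p$ layers for an arbitrary — possibly non‐smooth — unipotent group scheme in a uniform way: this is precisely the piece of structure theory that the cited reference supplies, and without it one is left to work by hand with the embedding into $U_n$ and with finite group schemes of order $p$.
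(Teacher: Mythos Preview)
The paper does not give its own proof of this proposition: it simply quotes the result as \cite[Exp.~XVII, Prop.~3.2]{SGA3}. So there is nothing to compare against at the level of argument.

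Your proof is correct and is essentially the standard d\'evissage one finds behind that citation. The reduction to $(\star)$ via local finiteness and the flat base change \eqref{hombase} is exactly right, and over $\bar k$ the composition series with successive quotients among $\Ga$, $\alpha_p$, $\Z/p$ is available for arbitrary (not necessarily smooth) unipotent subgroup schemes of $U_n$, which is precisely the generality the paper's definition allows. The inductive step $(V^N)^{U/N}=V^U$ and the three base cases are handled cleanly; note that in characteristic $0$ only the $\Ga$ case arises, so the argument simplifies there. Your closing caveat is well placed: the existence of such a filtration is the genuine structural input, and it is exactly what the SGA3 reference supplies.
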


\subsection{Induction} 
We construct simple modules by induction. 
The archetypal use of induction for reductive algebraic groups is of simple modules for a maximal torus, lifted to a Borel subgroup $B$ and induced to $G$. Such modules are then finite-dimensional since $G/B$ is a projective variety. The reader is warned that the induced modules we consider are generally infinite-dimensional.

The essential definition is this: Let $k$ be a unital ring and $M$ be an $H$-module for $H$ a closed flat subgroup scheme of the flat $k$-group scheme $G$ and let $M_a$ be the underlying $k$-group functor of $M$. 
Then from \cite[I.3.3]{Jan03} we have
\[\Ind_H^G(M)=\{f\in\Mor(G,M_a)\mid f(gh)=h^{-1}f(g)\text{ for all }g\in G(A),h\in H(A)\text{ and all }k\text{-algebras }A\},\]
is a $G$-module via $(g_1\cdot f)(g)=f(g_1^{-1}g)$. Of course, if $H=1$ is the trivial subgroup of $G$ we have $\Ind_H^G(k)=k[G]$ is the co-ordinate algebra of $G$, considered as a left $G$-module.

A key feature of induction is Frobenius reciprocity. For a $G$-module $N$ and $H$-module $M$, we have
\begin{equation}\Hom_G(N,\Ind_H^G(M))\cong \Hom_H(\Res^{G}_H(N),M),\label{frobreq}\end{equation}
where $\Res^G_H(N)=N|_H$ is the obvious $H$-module obtained by restriction.

If $G$ is unipotent and $k$ is a field then, then Prop.~\ref{unipsimp} implies that $k$ is the only simple module, and taking $H=1$, the above equation gives
\begin{equation}\label{unipindecomp}k\cong \Hom_H(k,k)\cong \Hom_G(k,\Ind_1^G(k))=\Hom_G(k,k[G]).\end{equation}
Thus $k[G]$ has a unique simple module in its socle (and is therefore indecomposable). 

This argument can be run in reverse, so that if one shows an induced module has a simple socle then it will follow that there is exactly one simple $G$-module up to isomorphism which has an $H$-homomorphism to the $H$-module being induced.

Another fact we need is that induction commutes with base change, \cite[I.3.5(3)]{Jan03}. Let $k'$ be a flat $k$-algebra. Then we have for each $H$-module $M$ a canonical isomorphism
\begin{equation}\label{indextension}\Ind_H^G(M)\otimes k'\cong \Ind_{H_{k'}}^{G_{k'}}(M\otimes k').\end{equation}

Lastly, we recall the tensor identity, \cite[I.3.6]{Jan03}. Let $N$ be a $G$-module that is flat over $k$. For any closed flat subgroup scheme $H$ of $G$ and any $H$-module $M$ there is a canonical isomorphism of $G$-modules
\begin{equation}\label{tensorid}\Ind_H^G(M\otimes \Res_H^G(N))\cong \Ind_H^G(M)\otimes N.\end{equation}

%
%
%

\subsection{Weil restriction}\label{sec:Weil} Since the notion of Weil restriction is at the heart of the structure theory of pseudo-reductive groups, we recall some of the important features from \cite[\S A.5]{CGP15}.	 If $B\to B'$ is a finite flat map of Noetherian rings, and $X'$ a quasi-projective $B'$-scheme, one may define the Weil restriction $X:=\R_{B'/B}(X')$. Then $X$ is a $B$-scheme of finite type satisfying the universal property
\[X(A)=X'(B'\otimes_B A),\]
for $A$ any $B$-algebra.

A key fact is that Weil restriction is right adjoint to base change along $\Spec(B)\to \Spec(B')$. That is to say that there is a bijection
\begin{equation}\label{weiladj}\Hom_B(Y,\R_{B'/B}(X'))\cong \Hom_{B'}(Y_{B'},X'),\end{equation}
which is natural in $X'$ and the $B$-scheme $Y$. Two situations are particularly important. If $X'=Z_{B'}$ for a $B$-scheme $B'$ then taking $Y=Z$ in (\ref{weiladj}), one has the identity map on the right-hand side, giving a canonical map $Z\to \R_{B'/B}(X')$; \cite[A.5.7]{CGP15} implies that this map is a closed immersion provided $\Spec(B')\to \Spec(B)$ is surjective (which will be true if $B$ is a field and $B'$ is non-zero, since then $\Spec(B)$ is a single point). Conversely, if we take $Y=\R_{B'/B}(X')$ the identity map on the left-hand side corresponds to a canonical map $\mathsf q:\R_{B'/B}(X')_{B'}\to X'$; \cite[A.5.10]{CGP15} implies this map is is surjective on all $A$-points for $A$ a $B$-algebra provided $B$ is a field and $B'$ is a finite local $B$-algebra with a purely inseparable residue field over $B$.

In case $X'=G'$ is a $B'$-group, we find $G:=X$ is a $B$-group. When $B=k$ is a field, and $B'=k'$ is a nonzero finite reduced $k$-algebra, then $G'$ is pseudo-reductive whenever $G$ is reductive. If $G'$ is defined over $k$ and we choose a $k$-descent $H$ of $G'$, then the remarks above show that $H$ embeds as a canonical subgroup in $G$; in particular this holds in the case that $G'$ is split reductive, hence:
\begin{equation}\label{canonicalsub}\text{If $G$ is a split reductive $k$-group, then $G$ embeds as a canonical subgroup of $\R_{k'/k}(G_{k'})$.}\end{equation}

\section{Existence and uniqueness}\label{sec:exist}

Let $k$ be an imperfect field of characteristic $p$ and let $G$ be a pseudo-split, pseudo-reductive $k$-group. Then by \cite[Thm.~3.4.6]{CGP15}, $G$ has a Levi subgroup $M$, containing a split torus $T$. A choice of Borel subgroup $B_M\supseteq T$ in $M$ containing negative root groups defines a partial ordering on weights together with a set of dominant weights $X(T)_+$. Since everything is flat over $k$, we may apply all the results of the previous section. Moreover, by \cite[II.2.4]{Jan03}, each simple module for $M$ has a unique highest weight $\lambda\in X(T)_+$, and any such is isomorphic to $L_M(\lambda):=\soc_M(\Ind_{B_M}^M(\lambda))$.

Having fixed this notation, we prove essentially the same is true for $G$. Define $Q_G(\lambda):=\Ind_M^G(L_M(\lambda))$ and let $k'$ be the minimal field of definition of the unipotent radical of $G$, so that $G_{k'}\cong M_{k'}\ltimes \RR_{u,k'}(G_{k'})$. Therefore, the $M_{k'}$-module $L_{M_{k'}}(\lambda)$ inherits a $G_{k'}$-structure by allowing $U:=\RR_{u,k'}(G_{k'})$ to act trivially on $L_{M_{k'}}(\lambda)$; conversely $U$ acts trivially on any simple $G_{k'}$-module, so this structure is unique. Write $L_{G_{k'}}(\lambda)$ for this module. 

%

\begin{theorem}\label{existanduniq}The socle of $Q_G(\lambda)$ is a simple $G$-module $L_G(\lambda)$. Any simple module for $G$ is isomorphic to $L_G(\lambda)$ for precisely one $\lambda\in X(T)_+$.\end{theorem}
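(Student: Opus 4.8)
The plan is to transfer everything to the finite extension $k'$ of the statement, where $G_{k'}\cong M_{k'}\ltimes U$ with $U=\RR_{u,k'}(G_{k'})$ and the representation theory is Clifford-theoretically transparent, and then to descend back to $k$. The tools are Frobenius reciprocity $(\ref{frobreq})$, the compatibility of $\Ind$ with the (flat) base change $k\hookrightarrow k'$ from $(\ref{indextension})$, and the standard fact that for the split reductive group $M$ each $L_M(\lambda)$ is absolutely irreducible, so that $L_M(\lambda)\otimes k'\cong L_{M_{k'}}(\lambda)$.

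First I would pin down $\soc_{G_{k'}}(Q_G(\lambda)_{k'})$. By $(\ref{indextension})$ and the last remark, $Q_G(\lambda)_{k'}\cong\Ind_{M_{k'}}^{G_{k'}}(L_{M_{k'}}(\lambda))$. Since $U$ is normal unipotent, Proposition~\ref{unipsimp} shows that on any simple $G_{k'}$-module the $U$-fixed points form a nonzero $G_{k'}$-submodule, hence everything; so every simple $G_{k'}$-module is an inflation $L_{G_{k'}}(\mu)$ of a simple $M_{k'}=G_{k'}/U$-module, $\mu\in X(T)_+$. Frobenius reciprocity and Schur's lemma then give
\[\Hom_{G_{k'}}\!\bigl(L_{G_{k'}}(\mu),Q_G(\lambda)_{k'}\bigr)\cong\Hom_{M_{k'}}\!\bigl(L_{M_{k'}}(\mu),L_{M_{k'}}(\lambda)\bigr),\]
which is $k'$ when $\mu=\lambda$ and $0$ otherwise. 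Hence $Q_G(\lambda)_{k'}\neq 0$ and $\soc_{G_{k'}}(Q_G(\lambda)_{k'})=L_{G_{k'}}(\lambda)$ is simple.

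Now I would descend. As $G$ is flat, $Q_G(\lambda)\neq 0$ is locally finite and so has nonzero socle; write $\soc_G(Q_G(\lambda))=\bigoplus_{i\in I}S_i$ with the $S_i$ simple (hence finite-dimensional) and $I\neq\varnothing$. Then $\bigoplus_i(S_i)_{k'}$ is a $G_{k'}$-submodule of $Q_G(\lambda)_{k'}$, so
\[\bigoplus_{i\in I}\soc_{G_{k'}}\!\bigl((S_i)_{k'}\bigr)=\soc_{G_{k'}}\!\Bigl(\bigoplus_i(S_i)_{k'}\Bigr)\subseteq\soc_{G_{k'}}\!\bigl(Q_G(\lambda)_{k'}\bigr)=L_{G_{k'}}(\lambda),\]
and each left-hand summand is nonzero; since a simple module contains no direct sum of two nonzero submodules, $|I|=1$. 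Thus $L_G(\lambda):=\soc_G(Q_G(\lambda))$ is simple, and the display also yields $\soc_{G_{k'}}(L_G(\lambda)_{k'})=L_{G_{k'}}(\lambda)$. For the exhaustion statement, let $S$ be any simple $G$-module; it is finite-dimensional, so $S|_M$ is a nonzero finite-dimensional $M$-module with a simple quotient $L_M(\lambda)$, some $\lambda\in X(T)_+$ \cite[II.2.4]{Jan03}. By $(\ref{frobreq})$, $\Hom_G(S,Q_G(\lambda))\cong\Hom_M(S|_M,L_M(\lambda))\neq 0$; a nonzero map out of the simple module $S$ is injective, so $S\hookrightarrow\soc_G(Q_G(\lambda))=L_G(\lambda)$, whence $S\cong L_G(\lambda)$. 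Finally, if $L_G(\lambda)\cong L_G(\mu)$ then $L_{G_{k'}}(\lambda)\cong L_{G_{k'}}(\mu)$ by taking $k'$-socles, and restricting to $M_{k'}$ recovers $\lambda=\mu$ as highest weights.

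The one delicate point is the descent step. Because $k'/k$ is purely inseparable, the socle functor does \emph{not} commute with $-\otimes_k k'$, so one may not simply assert $\soc_G(Q_G(\lambda))\otimes k'=\soc_{G_{k'}}(Q_G(\lambda)_{k'})$. What does survive is that $\soc_G(Q_G(\lambda))\otimes k'$ remains a $G_{k'}$-submodule of $Q_G(\lambda)_{k'}$, so its own socle lands inside the simple module $L_{G_{k'}}(\lambda)$; a direct sum of nonzero submodules of a simple module must be a single summand, and this is what forces simplicity over $k$. Everything else is formal, given the cited inputs that $L_M(\lambda)$ is absolutely irreducible and that $U$ acts trivially on every simple $G_{k'}$-module.
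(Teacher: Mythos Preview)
Your argument is correct and follows essentially the same route as the paper's: base-change $Q_G(\lambda)$ to $k'$, use Frobenius reciprocity and the triviality of $U$ on simples to compute $\Hom_{G_{k'}}(L_{G_{k'}}(\mu),Q_G(\lambda)_{k'})=\delta_{\lambda\mu}\,k'$, conclude the $k'$-socle is simple, and then descend. Your descent step (splitting the $k$-socle into simples and comparing $k'$-socles inside $L_{G_{k'}}(\lambda)$) is a more explicit unpacking of the paper's clause ``$Q_G(\lambda)_{k'}$---thus also $Q_G(\lambda)$---is indecomposable, with simple socle,'' but the content is the same.
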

\begin{proof}Let $\lambda\in X(T)_+$. Define $L_G(\lambda):=\soc_G(Q_G(\lambda))=\soc_G(\Ind_M^G(L_M(\lambda)))$. Then $L_G(\lambda)$ is non-zero if and only if there is a module $V$ such that $\Hom_G(V,Q_G(\lambda))\neq 0$, which is true if and only if $0\neq\Hom_G(V,Q_G(\lambda))_{k'}\cong\Hom_{G_{k'}}(V_{k'},Q_G(\lambda)_{k'})$, using (\ref{hombase}).

By Frobenius reciprocity and (\ref{indextension}) we have \[\Hom_{G_{k'}}( L_{G_{k'}}(\lambda),Q_G(\lambda)_{k'})\cong \Hom_{M_{k'}}(L_{M_{k'}}(\lambda),L_{M_{k'}}(\lambda))\cong k'\] and in particular $Q_G(\lambda)\neq 0$. Furthermore, since $\RR_{u,k'}(G_{k'})$ acts trivially on any simple $G_{k'}$-module, any simple $G_{k'}$-module is isomorphic to $L_{G_{k'}}(\mu)$ for some $\mu$. In particular, we have $\dim_{k'}\Hom_{G_{k'}}(L_{G_{k'}}(\mu),Q_G(\lambda)_{k'})=\delta_{\lambda\mu}$, so $Q_G(\lambda)_{k'}$---thus also $Q_G(\lambda)$---is indecomposable, with simple socle. Therefore $L_G(\lambda)=\soc_G(Q_G(\lambda))$ is a simple $G$-module as required. Furthermore, since $Q_G(\lambda)_{k'}$ has simple socle $L_{G_{k'}}(\lambda)$, an isomorphism $L_G(\lambda)\cong L_G(\mu)$ implies $L_{M_{k'}}(\lambda)\cong L_{M_{k'}}(\mu)$, so that $\lambda=\mu$.

Finally take any simple $G$-module $V$. This is finite-dimensional by local finiteness and so $\Res^G_M(V)$ has a simple $M$-quotient isomorphic to $L_M(\lambda)$, say, with $\lambda\in X(T)_+$. By Frobenius reciprocity, we get a homomorphism $V\to Q_G(\lambda)$, giving an isomorphism $V\cong L_G(\lambda)$. 
\end{proof}

\begin{remarks} (i). The proof of the theorem actually shows that for any affine algebraic group $G$ over a field $k$ (not necessarily connected or smooth), if $G$ admits a subgroup $M$ such that for some field extension $k'/k$, we have $G_{k'}\cong M_{k'}\ltimes U$ for some unipotent $k'$-group $U$, then its isomorphism classes of irreducible representations are in one-to-one correspondence with those of $M$.

(ii). The relationship between $L_G(\lambda)_{k'}$ and $L_{G_{k'}}(\lambda)$ is not as straightforward as the notation might suggest; in particular, 
$L_G(\lambda)_{k'}$ is not even semisimple in general. See Remark \ref{rems:notsemisimple}(i) below.
\end{remarks}

\section{Dimension formula and restriction to $M$}

Keep the notation of the previous section. Let $C=Z_G(T)$ be a Cartan $k$-subgroup of $G$. Then by Theorem \ref{existanduniq}, there is a unique simple $C$-module for any $\lambda\in X(T)$. (Note that $C$ is commutative so all weights are dominant for $C$.) Here we prove the following.

\begin{theorem}\label{dimformula}Let $\lambda\in X(T)_+$. Then \[\dim L_G(\lambda)=\dim L_M(\lambda)\cdot \dim L_C(\lambda).\]\end{theorem}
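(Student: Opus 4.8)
The strategy is to reduce everything to the split situation over the minimal field of definition $k'$ of the unipotent radical, where $G_{k'} \cong M_{k'} \ltimes U$ with $U = \RR_{u,k'}(G_{k'})$, and then count dimensions there. Since $\dim L_G(\lambda) = \dim_{k'} L_G(\lambda)_{k'}$ and similarly for $M$ and $C$, it suffices to establish the corresponding identity after base change to $k'$. The key structural input is that $C = Z_G(T)$ has Levi subgroup $T$ (indeed $C_{k'} \cong T \ltimes \RR_{u,k'}(C_{k'})$ and in fact $C_{\bar k}$ is nilpotent), so that $L_C(\lambda)_{k'}$ carries the full weight $\lambda$ on $T$ but is a nontrivial module precisely because of the Cartan's unipotent part. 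I would first record that, as an $M_{k'}$-module, $L_{G_{k'}}(\lambda) = L_{M_{k'}}(\lambda)$, so $L_G(\lambda)_{k'}$, while generally not semisimple, has an $M_{k'}$-composition series all of whose factors are $\cong L_{M_{k'}}(\lambda)$; hence $\dim L_G(\lambda)$ is $\dim L_M(\lambda)$ times the number of such factors, call it $n_G(\lambda)$. The content of the theorem is then $n_G(\lambda) = \dim L_C(\lambda)$.

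\textbf{Key steps.}
First I would identify $n_G(\lambda)$ representation-theoretically: by Frobenius reciprocity and the tensor identity (\ref{tensorid}), $Q_G(\lambda) = \Ind_M^G L_M(\lambda)$ and one computes the $M_{k'}$-socle or the multiplicity space $\Hom_{M_{k'}}(L_{M_{k'}}(\lambda), Q_G(\lambda)_{k'}) \cong \Hom_{U}(k', \Ind_1^U k') = \Hom_U(k', k'[U]) = k'$ — but that only sees the socle, not the composition length. Instead, the cleaner route is to show $L_G(\lambda)_{k'}$, as an $M_{k'}$-module, is $\Ind$-related to a module over the Cartan: restrict along a Borel $B = C \ltimes (\text{positive root groups of } M) \cdot \RR_{u,k}(\text{stuff})$. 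Concretely, I would use that the split reductive $M$ acts via its own highest-weight theory, so the "new" multiplicity comes entirely from the action of $C$ on the highest weight line: the highest weight space $L_G(\lambda)^{+}$ (with respect to a Borel $B_G$ of $G$ containing a Borel $B_M$ of $M$ and $C$) is a $C$-submodule, it generates $L_G(\lambda)$ under the $M$-action since $L_G(\lambda)$ is $M$-isotypic of type $L_M(\lambda)$ and $L_M(\lambda)$ is generated by its highest weight line, and one shows $L_G(\lambda)^{+} \cong L_C(\lambda)$. The containment $L_C(\lambda) \hookrightarrow L_G(\lambda)^{+}$ (or a surjection the other way) follows from the uniqueness part of Theorem \ref{existanduniq} applied to $C$, together with the fact that $L_G(\lambda)^{+}$ is a nonzero $C$-module with a generator of weight $\lambda$, hence contains $L_C(\lambda)$ in its socle; for the reverse I would argue that $L_G(\lambda)$ is spanned over $M$ by any single nonzero vector of $L_C(\lambda) \subseteq L_G(\lambda)^{+}$, so by minimality/simplicity $\dim L_G(\lambda) \le \dim L_M(\lambda)\cdot \dim L_C(\lambda)$, while the $M$-isotypic decomposition with socle multiplicity considerations gives $\ge$. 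Finally, combining the $\le$ from the generation argument with an $M_{k'}$-composition-length lower bound coming from $Q_G(\lambda)_{k'}$ pins down equality.

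\textbf{Main obstacle.}
The hard part is the reverse inequality $\dim L_G(\lambda) \ge \dim L_M(\lambda) \cdot \dim L_C(\lambda)$, i.e. showing that all of $L_C(\lambda)$ "survives" inside the highest-weight space of $L_G(\lambda)$ and is not collapsed when one passes to the simple quotient/socle of $Q_G(\lambda)$. One must rule out the possibility that the socle $L_G(\lambda)$ meets the $\lambda$-weight space in a proper $C$-submodule of $L_C(\lambda)$; since $L_C(\lambda)$ is itself simple as a $C$-module, this amounts to showing the intersection is nonzero, which is automatic, but one then needs that the $M$-orbit of this (possibly small) $C$-module already has the full dimension $\dim L_M(\lambda)\cdot\dim L_C(\lambda)$ — this requires knowing the $M$-action and $C$-action "commute enough", i.e. that $L_G(\lambda) \cong L_M(\lambda) \otimes_{k} (\text{something of dimension } \dim L_C(\lambda))$ as a vector space with $M$ acting on the first factor. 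I expect to obtain this by working over $k'$: there $L_G(\lambda)_{k'}$ is an $M_{k'}$-module all of whose composition factors are $L_{M_{k'}}(\lambda)$, and the number of them equals $\dim \Hom_{M_{k'}}(L_{M_{k'}}(\lambda), L_G(\lambda)_{k'})$ only if the extensions split, which they need not — so instead I count the $M_{k'}$-composition length via the $\RR_{u,k'}(G_{k'})$-action, relating it to $\dim_{k'} L_{C_{k'}}(\lambda)_{k'}$-worth of copies by analysing how $U = \RR_{u,k'}(G_{k'})$ and its intersection with $\RR_{u,k'}(C_{k'})$ sit inside $G_{k'}$, using that the positive and negative root groups of $M_{k'}$ together with $C_{k'}$ generate $G_{k'}$. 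Making this last bookkeeping precise — matching the "unipotent directions transverse to $M$" with the unipotent part of the Cartan — is where the real work lies.
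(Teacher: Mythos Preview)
Your proposal has the right general shape—pin down the $\lambda$-weight space of $L_G(\lambda)$ as a $C$-module and combine with $M$-isotypicity—but two essential ingredients are missing, and in one place your diagnosis of the difficulty is inverted.

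\textbf{The upper bound on the highest-weight space.} You correctly observe that $L_G(\lambda)_\lambda$ is a nonzero $C$-module, hence contains $L_C(\lambda)$; this is the easy inequality $\dim L_G(\lambda)_\lambda \geq \dim L_C(\lambda)$. The genuine work is the \emph{other} direction, $\dim L_G(\lambda)_\lambda \leq \dim L_C(\lambda)$, and your suggested route (``$L_G(\lambda)$ is spanned over $M$ by a single vector of $L_C(\lambda)$'') does not fly: $M$ is a proper subgroup of $G$, so there is no reason an $M$-cyclic submodule should be all of $L_G(\lambda)$. The paper instead introduces the pseudo-Borel $B = C \ltimes U$ and the induced module $H^0(\lambda) = \Ind_B^G(L_B(\lambda))$, and uses density of the big cell $U^+B \subset G$ to show that any $f \in H^0(\lambda)^{U^+}$ is determined by $f(1) \in L_B(\lambda)$; this gives $\dim H^0(\lambda)_\lambda = \dim L_C(\lambda)$, and since $L_G(\lambda) \hookrightarrow H^0(\lambda)$ the upper bound follows. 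Nothing in your base-change-to-$k'$ programme substitutes for this argument.

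\textbf{$M$-isotypicity and semisimplicity.} You invoke ``$L_G(\lambda)$ is $M$-isotypic of type $L_M(\lambda)$'' as if it were known, but this is a nontrivial fact that the paper proves separately (via duality $L_G(\lambda)^* \cong L_G(-w_0\lambda)$, Frobenius reciprocity into $Q_G(-w_0\mu)$, and the simplicity of the socles of the $Q_G$ modules). Without it, knowing $\dim L_G(\lambda)_\lambda$ does not give $\dim L_G(\lambda)$. Your fallback—counting $M_{k'}$-composition factors of $L_G(\lambda)_{k'}$ by analysing how $\RR_{u,k'}(C_{k'})$ sits inside $\RR_{u,k'}(G_{k'})$—is too vague to be a proof, and the claim that ``positive and negative root groups of $M_{k'}$ together with $C_{k'}$ generate $G_{k'}$'' is false in general (they generate only the subgroup $M_{k'} \cdot C_{k'}$, which need not be all of $G_{k'}$). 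So this step is a real gap, not merely bookkeeping.
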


Following \cite[\S2.1]{CGP15}, as $G$ is pseudo-split, we may take $\lambda$ a regular cocharacter with $C=Z_G(\lambda)$. We may define $B:=P_G(\lambda)$   as the subgroup whose $A$-points for any $k$-algebra $A$ is the collection \[P_G(\lambda)(A):=\{g\in G(A)\mid \lim_{t\to 0}\lambda(t)g\lambda(t)^{-1}\text{ exists}\}.\] Then $B$ is a minimal pseudo-parabolic subgroup or \emph{pseudo-Borel subgroup}, and we may assume that it corresponds  to the negative roots. 
Define also $B^+:=P_G(\lambda^{-1})$, the corresponding opposite pseudo-Borel. We have the decompositions $B:=C\ltimes U$ and $B^+:=C\ltimes U^+$ where $U:=U_G(\lambda)$ with
\[U_G(\lambda)(A):=\{g\in G(A)\mid \lim_{t\to 0}\lambda(t)g\lambda(t)^{-1}=1\},\]
and $U^+:=U_G(\lambda^{-1})$; indeed $U=\RR_{u,k}(B)$ and $U^+=\RR_{u,k}(B^+)$. Furthermore, $B\cap B^+=C$, by inspection.

The commutativity of $C$ implies that any weight space for $T\subseteq C$ is stable under $C$. From Theorem \ref{existanduniq} we therefore must have a submodule isomorphic to $L_C(\mu)$ in the $C$-module $L_G(\lambda)_\lambda$ for some $\mu\in X(T)$. It is straightforward to see that we must have $\lambda=\mu$. Thus the $\lambda$-weight space of $L_G(\lambda)$ is dimension at least $L_C(\lambda)$. We aim to exhibit a $G$-module whose highest weight is $\lambda$ and whose $\lambda$-weight space is dimension $L_C(\lambda)$. Thence we will observe that $\Res^G_M(L_G(\lambda))$ is $M$-semisimple and isotypic and the dimension formula will follow. For this we follow the  programme of \cite[\S II.2]{Jan03}.

For a $G$-module $V$, the fact that $U$ and $U^+$ are unipotent implies that $\soc_U(V)$ and $\soc_{U^+}(V)$ are non-zero modules for $U$ with trivial composition factors. Thus $V^U$ and $V^{U^+}$ 
(the subspaces of $U$- and $U^+$-fixed vectors) are both non-zero.

As $C$ normalises $U$ (resp.~$U^+$), $V^U$ (resp.~$V^{U^+}$) is a $B$-submodule (resp.~$B^+$-submodule) of $V$ on which $U$ (resp.~$U^+$) acts trivially. A simple $B$-submodule $W$ of $V^U$ (resp. $B^+$-submodule of $V^{U^+}$), restricts to a simple $C$-module $W|_C\cong L_C(\lambda)$ (resp.~$W|_C\cong L_C(\lambda'))$ guaranteed by Theorem \ref{existanduniq}; we denote the isomorphism class of $W$ by $L_B(\lambda)$ (resp.~$L_{B^+}(\lambda')$). We have thus: 

\begin{center}
there are $\lambda,\lambda'\in X(T)$ with $\Hom_B(L_B(\lambda),V)\neq 0\neq \Hom_{B^+}(L_B(\lambda'),V)$.
\end{center}

If $V$ is finite-dimensional (for example if $V$ is simple), then we may apply the above to $V^*$ and dualise to get 

\begin{center}
there are $\lambda,\lambda'\in X(T)$ with $\Hom_B(V,L_B(\lambda))\neq 0\neq \Hom_{B^+}(V,L_{B^+}(\lambda'))$.
\end{center}

Now, using Frobenius reciprocity \eqref{frobreq}, we get

\begin{lemma}\label{thereisalambda}
If $\dim V<\infty$, then there are $\lambda,\lambda'\in X(T)$ with 
\[\Hom_G(V,\Ind_B^G(L_B(\lambda)))\neq 0\neq \Hom_G(V,\Ind_{B^+}^G(L_{B^+}(\lambda')))\]
\end{lemma}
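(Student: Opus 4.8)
The statement follows by combining the observations already assembled in the text with Frobenius reciprocity, so the ``proof'' is essentially a matter of stringing these pieces together correctly.

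The plan is to start from the dualised statement displayed just before the lemma: since $\dim V < \infty$, applying the socle argument to $V^*$ (noting $(V^*)^U$ is nonzero because $U$ is unipotent, and contains a simple $B$-submodule restricting to some $L_C(\lambda)$) and dualising back yields weights $\lambda, \lambda' \in X(T)$ together with nonzero $B$-module map $V \to L_B(\lambda)$ and nonzero $B^+$-module map $V \to L_{B^+}(\lambda')$. Here $L_B(\lambda)$ denotes the simple $B$-module on which $U$ acts trivially and $C$ acts as $L_C(\lambda)$, and similarly for $L_{B^+}(\lambda')$; both $B$ and $B^+$ are closed flat $k$-subgroups of $G$ (being pseudo-parabolic subgroups of the flat group $G$), so the induction functors $\Ind_B^G$ and $\Ind_{B^+}^G$ are defined and Frobenius reciprocity \eqref{frobreq} applies.

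Next I would feed each of these maps into Frobenius reciprocity. For the first, $\Res_B^G(V)$ is just $V$ viewed as a $B$-module, and the nonzero element of $\Hom_B(\Res_B^G(V), L_B(\lambda))$ corresponds under \eqref{frobreq} (with $N = V$, $M = L_B(\lambda)$, $H = B$) to a nonzero element of $\Hom_G(V, \Ind_B^G(L_B(\lambda)))$. The same argument with $B$ replaced by $B^+$ and $L_B(\lambda)$ by $L_{B^+}(\lambda')$ gives a nonzero element of $\Hom_G(V, \Ind_{B^+}^G(L_{B^+}(\lambda')))$. Together these are exactly the two nonvanishing assertions of the lemma, with the same $\lambda, \lambda'$ as produced above.

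There is no real obstacle here; the only point requiring a word of care is making sure the hypotheses of \eqref{frobreq} are met, namely that $B$ and $B^+$ are closed flat subgroup schemes of the flat $k$-group $G$ — which holds since pseudo-parabolic subgroups such as $P_G(\lambda)$ and $P_G(\lambda^{-1})$ are smooth (hence flat) closed subgroups — and that the socle argument genuinely produces simple $B$- and $B^+$-submodules, which is where Proposition \ref{unipsimp} (forcing $U$, $U^+$ to act trivially on any simple sub) is used implicitly. Everything else is a direct quotation of the displayed facts preceding the lemma statement.
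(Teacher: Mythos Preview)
Your proposal is correct and follows exactly the paper's approach: the paper simply states ``Now, using Frobenius reciprocity \eqref{frobreq}, we get'' immediately after the displayed fact $\Hom_B(V,L_B(\lambda))\neq 0\neq \Hom_{B^+}(V,L_{B^+}(\lambda'))$, and your write-up spells out precisely this application. Your extra remarks about flatness of $B$ and $B^+$ are accurate but not needed beyond what the paper already assumes.
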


Denote the module $\Ind_B^G(L_B(\lambda))$ by $H^0(\lambda)$.

From \cite[Prop. 2.1.8(3)]{CGP15} we have:
\begin{equation}\label{bigcell}
U^+B\text{ and }UB^+\text{ are dense in }G.
\end{equation}

With this in hand, we can prove:

\begin{prop}\label{highweightprop} 
Let $\lambda\in X(T)$ with $H^0(\lambda)\neq 0$.
\begin{itemize}
\item[(a)] We have $\dim H^0(\lambda)^{U^+}=\dim L_B(\lambda)$ and $H^0(\lambda)^{U^+}=H^0(\lambda)_\lambda$.

\item[(b)] Each weight $\mu$ of $H^0(\lambda)$ satisfies $w_0(\lambda)\leq \mu\leq \lambda$,
where $w_0$ denotes the longest element in the Weyl group $W$.
\end{itemize}
\end{prop}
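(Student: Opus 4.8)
The strategy is to realise $H^0(\lambda)=\Ind_B^G(L_B(\lambda))$ as a space of morphisms out of $U^+$ by restricting along the dense big cell, and then to read off both statements from the $T$- and $C$-actions. Since $U^+\cap B=1$ (their Lie algebras meet only in $0$, and both groups are smooth connected), the multiplication map $U^+\times B\to G$ is injective with dense image $U^+B$ by \eqref{bigcell}; as $G$ is reduced an $f\in H^0(\lambda)$ is determined by $f|_{U^+B}$, hence by $f|_{U^+}$ via $f(u^+b)=b^{-1}f(u^+)$. Thus restriction gives an injection
\[
\rho\colon H^0(\lambda)\hookrightarrow\Mor(U^+,L_B(\lambda)_a)=L_B(\lambda)\otimes_k k[U^+].
\]
Unwinding the $G$-action shows $\rho$ is equivariant for left translation by $U^+$ on the target and for the $C$-action in which $C$ acts diagonally, through its given action on $L_B(\lambda)=L_C(\lambda)$ and through conjugation on $k[U^+]$. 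In particular $\rho$ is $T$-equivariant, so $H^0(\lambda)$ has a weight decomposition refined by that of $L_B(\lambda)\otimes k[U^+]$; here $T$ acts on $L_B(\lambda)=L_C(\lambda)$ through the single character $\lambda$ (because $T$ is central in $C$ and $L_C(\lambda)$ is simple, as is already visible on $\Ind_T^C(\lambda)$), the $T$-weights of $k[U^+]$ lie in the submonoid of $X(T)$ generated by $\{-\alpha:\alpha\in\Phi^+\}$ (filter $k[U^+]$ by powers of the augmentation ideal; the associated graded is a quotient of the symmetric algebra on $(\Lie U^+)^\ast$, whose weights are negatives of positive roots), and the weight-$0$ part of $k[U^+]$ is exactly $k$ since the defining cocharacter of $U^+$ contracts it to the identity.

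\textbf{Part (a).} The left-translation $U^+$-invariants of $\Mor(U^+,L_B(\lambda)_a)$ are precisely the constant morphisms, which as a $C$-module are identified with $L_B(\lambda)=L_C(\lambda)$. Hence $\rho$ restricts to a $C$-equivariant injection $H^0(\lambda)^{U^+}\hookrightarrow L_C(\lambda)$. Since $H^0(\lambda)\ne0$ and $U^+$ is unipotent, $H^0(\lambda)^{U^+}\ne0$ (local finiteness and Proposition~\ref{unipsimp}), and simplicity of $L_C(\lambda)$ then forces $H^0(\lambda)^{U^+}\cong L_C(\lambda)$, so $\dim H^0(\lambda)^{U^+}=\dim L_C(\lambda)=\dim L_B(\lambda)$. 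For the identification with $H^0(\lambda)_\lambda$: the $T$-weight of $H^0(\lambda)^{U^+}\cong L_C(\lambda)$ is $\lambda$, giving $H^0(\lambda)^{U^+}\subseteq H^0(\lambda)_\lambda$; conversely, by the weight description above the $\lambda$-weight space of $L_B(\lambda)\otimes k[U^+]$ equals $L_B(\lambda)\otimes k$, i.e.\ the constant morphisms, which are $U^+$-invariant, so $H^0(\lambda)_\lambda\subseteq H^0(\lambda)^{U^+}$.

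\textbf{Part (b).} The upper bound is immediate: every weight of $H^0(\lambda)\hookrightarrow L_B(\lambda)\otimes k[U^+]$ is of the form $\lambda+\nu$ with $\nu$ in the monoid generated by $-\Phi^+$, hence $\le\lambda$. For the lower bound I twist by the longest Weyl element: choose a representative $\dot w_0\in N_M(T)(k)\subseteq G(k)$ of $w_0$ (available as $M$ is split reductive). Then $f\mapsto(g\mapsto f(g\dot w_0))$ is a $G$-module isomorphism $\Ind_B^G(L_B(\lambda))\xrightarrow{\ \sim\ }\Ind_{B^+}^G\!\big({}^{\dot w_0}L_B(\lambda)\big)$, using $\dot w_0 B\dot w_0^{-1}=B^+$, and ${}^{\dot w_0}L_B(\lambda)$ is the simple $B^+$-module on which $U^+$ acts trivially and $T$ acts through $w_0\lambda$, i.e.\ $L_{B^+}(w_0\lambda)$. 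Running the argument of the first paragraph again with $U$, $B^+$ and the dense cell $UB^+$ in place of $U^+$, $B$, $U^+B$ — now the $T$-weights of $k[U]$ lie in the monoid generated by $\{-\alpha:\alpha\in\Phi^-\}=\Phi^+$ — shows every weight of $H^0(\lambda)$ has the form $w_0\lambda+\nu'$ with $\nu'$ a nonnegative combination of positive roots, hence $\ge w_0\lambda$.

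\textbf{Main obstacle.} The formal parts (injectivity and equivariance of $\rho$, the description of $U^+$-invariant morphisms, the twist isomorphism) are routine. What genuinely requires the structure theory of \cite[\S2.1]{CGP15} — together with \eqref{bigcell} — is the package of facts about $U^{\pm}$: that $U^+\cap B=1$, that $U^+B$ and $UB^+$ are dense, that the $T$-weights on $\Lie U^+$ are exactly the positive roots, and that the defining cocharacter contracts $U^+$ so that $(k[U^+])_0=k$. Accordingly the only place one must be careful is the weight bookkeeping for $k[U^{\pm}]$, which feeds into both the equality $H^0(\lambda)^{U^+}=H^0(\lambda)_\lambda$ and the two-sided bound in (b).
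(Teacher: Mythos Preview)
Your argument is correct, but it diverges from the paper's in two places worth noting.

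For the equality $H^0(\lambda)^{U^+}=H^0(\lambda)_\lambda$ and the upper bound $\mu\le\lambda$, the paper does not compute the $T$-weights of $k[U^+]$ at all. Instead it observes that any \emph{maximal} weight $\mu$ of $H^0(\lambda)$ satisfies $H^0(\lambda)_\mu\subseteq H^0(\lambda)^{U^+}$ (since positive root groups can only raise weights), and combines this with the inclusion $H^0(\lambda)^{U^+}\subseteq H^0(\lambda)_\lambda$ coming from the evaluation map to force $\mu=\lambda$. This is lighter: it needs only that $U^+$ is generated by positive root groups, not the full filtration/contraction analysis of $k[U^+]$ you set up.

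For the lower bound $w_0(\lambda)\le\mu$, the paper takes a completely different and much shorter route: it simply restricts $H^0(\lambda)$ to the split Levi $M$ and invokes the standard fact \cite[II.1.19(1)]{Jan03} that the set of $T$-weights of any $M$-module is $W$-stable; then $\mu\le\lambda$ applied to $w_0(\mu)$ gives $w_0(\lambda)\le\mu$. Your $\dot w_0$-twist $H^0(\lambda)\cong\Ind_{B^+}^G(L_{B^+}(w_0\lambda))$ is valid (one does have $\dot w_0 B\dot w_0^{-1}=B^+$ because $P_G(w_0\lambda)=P_G(-\lambda)$ for regular $\lambda$, and $\dot w_0$ normalises $C=Z_G(T)$), and repeating the big-cell argument on the $UB^+$ side works; it is just more labour-intensive and imports more of the \cite{CGP15} structure theory than the paper's one-line reduction to $M$.

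In short: your proof buys self-containment (no appeal to the $W$-stability of weights for split reductive groups) at the cost of a finer analysis of $k[U^\pm]$ and the $w_0$-conjugation of pseudo-Borels; the paper's proof is shorter precisely because it offloads the lower bound to the well-understood reductive subgroup $M$.
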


\begin{proof} Recall that \[H^0(\lambda)=\{f\in \Mor(G,L_C(\lambda))|f(gb)=b^{-1}f(g)\text{ for all }g\in G(A), b\in B(A)\text{ and all }A\}.\]

The action of $G$ is given by left translation. Since $U^+$ acts trivially on $H^0(\lambda)^{U^+}$ and $U \subseteq B$,
given $f\in H^0(\lambda)^{U^+}$ we have
\[f(u_1cu_2)=c^{-1}f(1)\]
for all $u_1\in U^+(A)$, $c\in C(A)$, $u_2\in U(A)$, and all $A$. 
Thus $f(1)$ determines the restriction of $f$ to $U^+B$, and hence in fact determines $f$ itself as $U^+B$ is dense in $G$ by (\ref{bigcell}). 
Now $f(1)\in L_B(\lambda)$, so $\dim H^0(\lambda)^{U^+}\leq \dim L_B(\lambda)$. 
Moreover, the evaluation map $\epsilon:H^0(\lambda)\to L_B(\lambda)$ given by $f\mapsto f(1)$ is a homomorphism of $B$-modules which is injective on $H^0(\lambda)^{U^+}$. 
This implies \[H^0(\lambda)^{U^+}\subseteq H^0(\lambda)_\lambda.\]
If $\mu$ is a maximal weight of $H^0(\lambda)$ then $H^0(\lambda)_\mu\subseteq H^0(\lambda)^{U^+}\subseteq H^0(\lambda)_\lambda$, but this allows us to conclude both that $H^0(\lambda)^{U^+}=H^0(\lambda)_\lambda$ and that $\mu\leq \lambda$ for any weight $\mu$ of $H^0(\lambda)$.

Now, restricting to the Levi subgroup $M$ of $G$, we see that if $\mu$ is a weight, then so is $w_0(\mu)$ by \cite[II.1.19(1)]{Jan03}, hence $w_0(\mu)\leq \lambda$ and $w_0(\lambda)\leq \mu$.
\end{proof}

We have thus found a module $H^0(\lambda)$ of highest weight $\lambda$ whose $\lambda$-weight space is of dimension $\dim L_C(\lambda)=\dim L_B(\lambda)$ as required. In fact, one also sees:

\begin{cor}\label{simplecor}
If $H^0(\lambda)\neq 0$ then $\soc_G(H^0(\lambda))$ is simple.
\end{cor}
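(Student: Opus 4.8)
The plan is to show that any two nonzero $G$-submodules of $H^0(\lambda)$ intersect nontrivially, which forces the socle to be indecomposable, hence (since $G$-modules over a field are locally finite and the socle is semisimple) simple. The key is to control the behaviour of submodules on the highest weight space $H^0(\lambda)_\lambda = H^0(\lambda)^{U^+}$, which by Proposition \ref{highweightprop}(a) has dimension exactly $\dim L_B(\lambda)$ and restricts to the $C$-module $L_B(\lambda) \cong L_C(\lambda)$.

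First I would argue that every nonzero $G$-submodule $V \subseteq H^0(\lambda)$ has nonzero intersection with $H^0(\lambda)^{U^+}$: since $U^+$ is unipotent, $V^{U^+} = V \cap H^0(\lambda)^{U^+} \neq 0$ by Proposition \ref{unipsimp} applied to the nonzero $U^+$-module $V$. So it suffices to show that any nonzero $B^+$-submodule (equivalently, nonzero $C$-submodule, as $U^+$ acts trivially there) of $H^0(\lambda)^{U^+}$ is the whole of $H^0(\lambda)^{U^+}$; then any two nonzero $G$-submodules meet inside $H^0(\lambda)^{U^+}$. This last point is exactly the statement that $H^0(\lambda)^{U^+} \cong L_C(\lambda)$ is a \emph{simple} $C$-module, which is guaranteed by Theorem \ref{existanduniq} and the identification $H^0(\lambda)^{U^+} = H^0(\lambda)_\lambda$ restricting to $L_B(\lambda)$. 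The remaining subtlety is to rule out that $V \cap H^0(\lambda)^{U^+}$ is a proper nonzero $C$-submodule — but $L_C(\lambda)$ being simple leaves no room for this, so $V \supseteq H^0(\lambda)^{U^+}$ for every nonzero $G$-submodule $V$.

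With that established, if $V_1, V_2$ are any two nonzero $G$-submodules of $H^0(\lambda)$, then each contains $H^0(\lambda)^{U^+}$, so $V_1 \cap V_2 \supseteq H^0(\lambda)^{U^+} \neq 0$. Hence $H^0(\lambda)$ is indecomposable; moreover $\soc_G(H^0(\lambda))$, being a sum of simple submodules each of which contains $H^0(\lambda)^{U^+}$, cannot contain two distinct simple summands (their intersection would be nonzero yet proper in each), so it is a single simple module. I would phrase this cleanly by noting that the socle is semisimple, so writing $\soc_G(H^0(\lambda)) = \bigoplus_i S_i$ with each $S_i$ simple, the inclusion $H^0(\lambda)^{U^+} \subseteq S_i$ for all $i$ forces the sum to have a single term.

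The main obstacle I anticipate is making rigorous the claim that \emph{every} nonzero $G$-submodule contains $H^0(\lambda)^{U^+}$ in its entirety, rather than just meeting it: one needs that $H^0(\lambda)^{U^+}$ has no proper nonzero $C$-stable (equivalently $B^+$-stable) subspace, and for this the simplicity of $L_C(\lambda)$ from Theorem \ref{existanduniq} is essential — one cannot merely invoke that $U^+$ has a nonzero fixed vector. A secondary technical point is checking that $V \cap H^0(\lambda)^{U^+}$ is genuinely $C$-stable, which follows since $V$ is $G$-stable, hence $C$-stable, and $H^0(\lambda)^{U^+}$ is $C$-stable as $C$ normalises $U^+$; this is already noted in the discussion preceding Lemma \ref{thereisalambda}.
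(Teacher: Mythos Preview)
Your argument is correct and is essentially the same as the paper's: both hinge on the fact that any nonzero $G$-submodule has nonzero $U^+$-invariants forming a $C$-submodule of $H^0(\lambda)^{U^+}\cong L_C(\lambda)$, which by simplicity must be the whole highest weight space. The paper phrases this as a dimension count ($L_1^{U^+}\oplus L_2^{U^+}$ would have dimension $2\dim L_C(\lambda)$ inside a space of dimension $\dim L_C(\lambda)$), leaving implicit the step you spell out explicitly---that each $L_i^{U^+}$ has dimension $\dim L_C(\lambda)$ precisely because $L_C(\lambda)$ is simple.
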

\begin{proof} 
If $L_1$ and $L_2$ are two simple submodules of $H^0(\lambda)$ then $L_1\oplus L_2\subseteq H^0(\lambda)$ hence $L_1^{U^+}\oplus L_2^{U^+}\subseteq H^0(\lambda)^{U^+}$ and $\dim H^0(\lambda)^{U^+}\geq 2\cdot \dim L_C(\lambda)$, contradicting Prop.~ \ref{highweightprop}(a).
\end{proof}

In order to finish, we first need to show that $\soc_G(H^0(\lambda))$ coincides with $L_G(\lambda)$ as defined in the previous section. 
Just for the following proof let us denote $\tilde L_G(\lambda):=\soc_G(H^0(\lambda))$.

\begin{lemma}\label{socsaresame} We have $\tilde L_G(\lambda)\cong L_G(\lambda)$.\end{lemma}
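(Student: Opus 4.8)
The goal is to identify the two competing constructions of the simple module: $L_G(\lambda) = \soc_G(Q_G(\lambda))$ where $Q_G(\lambda)=\Ind_M^G(L_M(\lambda))$, and $\tilde L_G(\lambda) = \soc_G(H^0(\lambda))$ where $H^0(\lambda)=\Ind_B^G(L_B(\lambda))$. Both are simple (by Theorem~\ref{existanduniq} and Corollary~\ref{simplecor} respectively), so by Theorem~\ref{existanduniq} each is isomorphic to $L_G(\mu)$ for a unique dominant weight $\mu$, and it suffices to show the highest weights agree. The plan is to show $\tilde L_G(\lambda)$ has highest weight $\lambda$ and that this forces $\tilde L_G(\lambda)\cong L_G(\lambda)$.

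First I would establish that $\tilde L_G(\lambda)$ has highest weight exactly $\lambda$. By Proposition~\ref{highweightprop}(b), every weight $\mu$ of $H^0(\lambda)$ satisfies $\mu\le\lambda$, so the same holds for its submodule $\tilde L_G(\lambda)$; thus the highest weight of $\tilde L_G(\lambda)$ is $\le\lambda$. For the reverse, I need $\lambda$ itself to be a weight of $\tilde L_G(\lambda)$. The key point is Proposition~\ref{highweightprop}(a): $H^0(\lambda)^{U^+}=H^0(\lambda)_\lambda$ is nonzero, and $H^0(\lambda)^{U^+}$, being $U$-fixed as well (since $U^+B=U^+CU$ is dense and an element fixed by $U^+$ and lying in the $\lambda$-weight space is determined by its value at $1$), sits inside $\soc_G(H^0(\lambda))=\tilde L_G(\lambda)$ — more carefully, any nonzero $G$-submodule of $H^0(\lambda)$ meets the socle, and since the socle is simple, $H^0(\lambda)^{U^+}$ and $\tilde L_G(\lambda)$ have nonzero intersection inside the $\lambda$-weight space; hence $\lambda$ is a weight of $\tilde L_G(\lambda)$. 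Therefore $\tilde L_G(\lambda)\cong L_G(\nu)$ with $\nu$ the highest weight of $\tilde L_G(\lambda)$, and $\nu\le\lambda$ with $\lambda$ a weight, so $\nu=\lambda$.

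Alternatively, and perhaps more cleanly, I would produce a nonzero $G$-homomorphism directly between the two induced modules and chase socles. Note $L_B(\lambda)\cong L_C(\lambda)$ as $C$-modules with $U$ acting trivially; restricting the $M$-module $L_M(\lambda)$ to $B_M=T\ltimes U_M$ and then viewing things appropriately, one has a $B$-module map relating $L_B(\lambda)$ to (a quotient or sub of) $\Res^G_B(L_M(\lambda))$ via the highest weight line. Concretely, $L_M(\lambda)^{U^+_M}$ is the $\lambda$-weight line, on which $B$ acts through $C$ by $\lambda$, giving a $B$-module surjection onto $L_B(\lambda)$ after twisting by the trivial $U$-action — wait, one must be careful that $U$ (not just $U_M$) acts appropriately; the honest route is to use that $\Res^G_M L_G(\lambda)$ is built from $L_M(\lambda)$'s. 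The simplest rigorous path: both $L_G(\lambda)$ and $\tilde L_G(\lambda)$ are simple with a one-dimensional-over-$\End$ highest weight behaviour; since $\tilde L_G(\lambda)$ has highest weight $\lambda$ by the paragraph above, and by Theorem~\ref{existanduniq} the simple module of highest weight $\lambda$ is unique, we conclude $\tilde L_G(\lambda)\cong L_G(\lambda)$.

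The main obstacle I anticipate is the bookkeeping around $U$ versus $U_M=U\cap M$ and making sure that ``highest weight $\lambda$'' for $\tilde L_G(\lambda)$ is pinned down without circularity — i.e. genuinely extracting that $\lambda$ occurs as a weight of the socle (not merely of $H^0(\lambda)$), which is why I lean on simplicity of $\soc_G(H^0(\lambda))$ from Corollary~\ref{simplecor} together with the fact that $H^0(\lambda)^{U^+}$ is contained in (or at least nontrivially meets) every nonzero submodule. Once highest weights are matched, the uniqueness clause of Theorem~\ref{existanduniq} closes the argument immediately.
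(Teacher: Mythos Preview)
Your main line of argument has a genuine gap at the final step. You correctly establish that $\tilde L_G(\lambda)$ has highest weight exactly $\lambda$: since $\tilde L_G(\lambda)^{U^+}\neq 0$ and $\tilde L_G(\lambda)^{U^+}\subseteq H^0(\lambda)^{U^+}=H^0(\lambda)_\lambda$ by Proposition~\ref{highweightprop}(a), the weight $\lambda$ occurs, and Proposition~\ref{highweightprop}(b) bounds all weights by $\lambda$. The problem is the sentence ``by Theorem~\ref{existanduniq} the simple module of highest weight $\lambda$ is unique, we conclude $\tilde L_G(\lambda)\cong L_G(\lambda)$.'' Theorem~\ref{existanduniq} parametrises simple modules by $X(T)_+$ via $\mu\mapsto \soc_G\bigl(\Ind_M^G L_M(\mu)\bigr)$; it does \emph{not} assert that $L_G(\mu)$ has highest weight $\mu$. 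That identification of the label with the highest weight is precisely what Lemma~\ref{socsaresame} is there to prove, so invoking it here is circular. Concretely, from Theorem~\ref{existanduniq} you only know $\tilde L_G(\lambda)\cong L_G(\sigma(\lambda))$ where $L_M(\sigma(\lambda))$ is the unique simple $M$-quotient of $\tilde L_G(\lambda)$; the weight bound gives $\sigma(\lambda)\le\lambda$, but you have not argued equality. (One \emph{can} finish along these lines by a pigeonhole on the finite set $\{\mu\in X(T)_+:\mu\le\lambda\}$, using that $\sigma$ is a bijection with $\sigma(\mu)\le\mu$ for all $\mu$; but that is extra work you do not supply.)

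The paper avoids this circularity by the route you sketch in your ``alternatively'' paragraph but do not carry out. Since all weights of $\tilde L_G(\lambda)$ are $\le\lambda$, it lies in the truncated category where $H^0_M(\lambda)=\Ind_{B_M}^M(k_\lambda)$ is the injective hull of $L_M(\lambda)$, giving a nonzero $M$-map $\tilde L_G(\lambda)\to H^0_M(\lambda)$ and hence, by Frobenius reciprocity, a nonzero $G$-map $\tilde L_G(\lambda)\to \Ind_M^G(H^0_M(\lambda))$. Separately, left-exactness of $\Ind_M^G$ applied to $L_M(\lambda)\hookrightarrow H^0_M(\lambda)$ embeds $Q_G(\lambda)$ (and hence $L_G(\lambda)$) into the same module. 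A base-change computation as in the proof of Theorem~\ref{existanduniq} shows $\Ind_M^G(H^0_M(\lambda))$ has simple socle, so both $\tilde L_G(\lambda)$ and $L_G(\lambda)$ coincide with that socle. This is where your worry about $U$ versus $U_M$ is sidestepped: one never needs a $B$-map out of $\tilde L_G(\lambda)$, only an $M$-map to $H^0_M(\lambda)$, and that comes for free from the weight bound.
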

\begin{proof}Denote $H^0_M(\lambda):=\Ind_{B_M}^M(\lambda)$ where $B_M$ is the lower Borel subgroup of $M$. This module is the injective hull of $L_M(\lambda)$ in the category of modules with weights $\mu\leq \lambda$; see \cite[A.6]{Jan03}. Since $\tilde L_G(\lambda)$ has weights $\mu\leq\lambda$ it therefore admits a non-zero $M$-homomorphism to $H^0_M(\lambda)$. Hence, by Frobenius reciprocity we get a non-zero homomorphism from $\tilde L_G(\lambda)$ to $\Ind_M^G(H^0_M(\lambda))$. Since there is an injection $L_M(\lambda)\to H^0_M(\lambda)$, the left-exactness of the $\Ind_M^G$ functor means that there is also an injection $Q_G(\lambda)\to\Ind_M^G(H^0_M(\lambda))$. But as $L_G(\lambda)$ is the socle of $Q_G(\lambda)$ we will be done if we can show that $\Ind_M^G(H^0_M(\lambda))$ has a simple socle, for then $L_G(\lambda)$ and $\tilde L_G(\lambda)$ must both map to that simple socle. As in the proof of Theorem \ref{existanduniq}, base change everything to $k'$ and consider \[\Hom_{G_{k'}}(L_{G_{k'}}(\mu),\Ind_{M_{k'}}^{G_{k'}}(H^0_M(\lambda)_{k'}))\cong \Hom_{M_{k'}}(L_{M_{k'}}(\mu),H^0_M(\lambda)_{k'}),\] using Frobenius reciprocity. The latter has dimension $\delta_{\lambda\mu}$ over $k'$ owing to the simplicity of the socle of $H^0_M(\lambda)_{k'}\cong H^0_{M_{k'}}(\lambda)$. This proves the claim.\end{proof}

\begin{cor}The simple $G$-module $L_G(\lambda)$ is isotypic and semisimple on restriction to $M$.\end{cor}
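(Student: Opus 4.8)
The plan is to deduce the corollary from the structural results already in place, namely Proposition~\ref{highweightprop}, Corollary~\ref{simplecor}, and Lemma~\ref{socsaresame}, together with the highest weight theory for the split reductive group $M$. By Lemma~\ref{socsaresame} we may identify $L_G(\lambda)$ with $\soc_G(H^0(\lambda))$, so it suffices to understand the $M$-module structure of $L_G(\lambda) \subseteq H^0(\lambda)$.

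First I would analyse $\Res^G_M L_G(\lambda)$ via its socle. The group $U^+$ contains $\RR_{u,k}(B^+_M)$, the unipotent radical of the opposite Borel of $M$; more precisely, from the decomposition $B^+ = C \ltimes U^+$ and the fact that $M$ is a Levi subgroup, the positive root groups of $M$ (relative to $B_M$) all lie in $U^+$. Hence $L_G(\lambda)^{U^+}$ is contained in the space of $\RR_{u,k}(B^+_M)$-fixed vectors, which by standard highest weight theory for $M$ (as in \cite[II.2]{Jan03}) is a sum of highest weight spaces. Combined with Proposition~\ref{highweightprop}(a), which gives $L_G(\lambda)^{U^+} \subseteq H^0(\lambda)^{U^+} = H^0(\lambda)_\lambda$, we learn that every $M$-highest weight occurring in $L_G(\lambda)$ equals $\lambda$, and the multiplicity of $L_M(\lambda)$ in $\soc_M(L_G(\lambda))$ is at most $\dim L_G(\lambda)_\lambda = \dim L_C(\lambda)$.

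Next I would show $\Res^G_M L_G(\lambda)$ is semisimple by a dimension count. Let $m$ denote the multiplicity of $L_M(\lambda)$ in $\soc_M(L_G(\lambda))$, so $m \le \dim L_C(\lambda)$ by the previous step. Since $\soc_M(L_G(\lambda))$ contains $m$ copies of $L_M(\lambda)$, each contributing a $\lambda$-weight space of dimension $1$ and total dimension $\dim L_M(\lambda)$, and since $L_G(\lambda)$ is generated as a $G$-module (indeed as a $U B^+$-module, using density from \eqref{bigcell}) by its $\lambda$-weight space together with its $U$-action, one gets $\dim L_G(\lambda) \le \dim L_M(\lambda) \cdot \dim L_C(\lambda)$. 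On the other hand, the $\lambda$-weight space of $L_G(\lambda)$ has dimension at least $\dim L_C(\lambda)$ (this was established just before Lemma~\ref{thereisalambda}), and since $L_G(\lambda)$ restricted to $M$ has all its highest weights equal to $\lambda$, the submodule of $\Res^G_M L_G(\lambda)$ generated by $\soc_M$ already accounts for $m \cdot \dim L_M(\lambda) \geq \dim L_C(\lambda)\cdot \dim L_M(\lambda)$ once we show $m = \dim L_C(\lambda)$; equality forces $\Res^G_M L_G(\lambda) = \soc_M(L_G(\lambda))$, i.e. the restriction is semisimple and $L_M(\lambda)$-isotypic, and simultaneously yields the dimension formula $\dim L_G(\lambda) = \dim L_M(\lambda)\cdot\dim L_C(\lambda)$ of Theorem~\ref{dimformula}.

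The main obstacle is controlling $m$ from below, i.e. proving $m \ge \dim L_C(\lambda)$ rather than just $\le$. For this I would argue that $\soc_M(L_G(\lambda))$, being a sum of copies of $L_M(\lambda)$, has its $\lambda$-weight space of dimension exactly $m$, and that this $\lambda$-weight space is all of $L_G(\lambda)_\lambda$: indeed $L_G(\lambda)_\lambda \subseteq L_G(\lambda)^{U^+} \subseteq L_G(\lambda)^{\RR_{u,k}(B^+_M)}$ consists of $M$-highest weight vectors of weight $\lambda$, each of which generates a copy of $L_M(\lambda)$ inside the semisimple-on-$M$ part, hence lies in $\soc_M$. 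Therefore $m = \dim L_G(\lambda)_\lambda \ge \dim L_C(\lambda)$, closing the circle. The delicate point worth double-checking is that $L_G(\lambda)_\lambda$ really does land in $\soc_M$: this needs that a highest weight vector for $M$ generates a \emph{simple} $M$-module, which is exactly the statement that $L_M(\lambda)$ is the unique simple $M$-module with highest weight $\lambda$ and that any highest-weight-$\lambda$ cyclic $M$-module surjects onto it — but here we need it to be simple, which follows because all weights $\mu$ of $L_G(\lambda)$ satisfy $\mu \le \lambda$ by Proposition~\ref{highweightprop}(b), so the $M$-submodule generated by a $\lambda$-weight vector has simple head $L_M(\lambda)$ and, having no weights $> \lambda$, must actually equal $L_M(\lambda)$ by the injective hull argument of Lemma~\ref{socsaresame} applied over $M$.
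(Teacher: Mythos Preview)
There is a genuine gap in your argument, and it appears right at the start. You correctly observe that $\RR_{u,k}(B^+_M)\subseteq U^+$, which gives the inclusion $L_G(\lambda)^{U^+}\subseteq L_G(\lambda)^{\RR_{u,k}(B^+_M)}$. But then you conclude that ``every $M$-highest weight occurring in $L_G(\lambda)$ equals $\lambda$'', which would require the \emph{opposite} inclusion. An $M$-highest weight vector is, by definition, fixed by $\RR_{u,k}(B^+_M)$; there is no a priori reason it should also be fixed by the larger group $U^+$, so Proposition~\ref{highweightprop}(a) tells you nothing about such vectors. Consequently you have not ruled out the possibility that $\soc_M(L_G(\lambda))$ contains some $L_M(\mu)$ with $\mu<\lambda$, and the rest of your argument (which repeatedly invokes ``all $M$-highest weights equal $\lambda$'') collapses. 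The point you yourself flag as ``delicate'' is also wrong: a $\lambda$-weight vector $v\in L_G(\lambda)_\lambda$ generates an $M$-submodule which is a quotient of the Weyl module $V_M(\lambda)$, but there is no reason this quotient must be $L_M(\lambda)$ --- the injective-hull property of $H^0_M(\lambda)$ gives a map \emph{out of} $Mv$, not an embedding, so it does not force simplicity.

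The paper closes exactly this gap by a duality trick you do not use. Suppose $L_M(\mu)$ with $\mu<\lambda$ occurs in $\soc_M(L_G(\lambda))$. Dualising gives a nonzero $M$-map $L_G(-w_0\lambda)\to L_M(-w_0\mu)$, and Frobenius reciprocity then produces a nonzero $G$-map $L_G(-w_0\lambda)\to Q_G(-w_0\mu)$. But Theorem~\ref{existanduniq} says $Q_G(-w_0\mu)$ has simple socle $L_G(-w_0\mu)$, forcing $\lambda=\mu$, a contradiction. A second, symmetric application of the same idea (now looking at $M$-quotients rather than $M$-submodules) shows that \emph{every} $M$-composition factor is $L_M(\lambda)$ and lies already in the socle, giving semisimplicity. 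The dimension formula is then read off afterwards; it is not an input to the proof of the corollary as you attempt.
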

\begin{proof}In view of Lemma \ref{socsaresame} and Prop.~\ref{highweightprop}(b) the weights $\mu$ of $L_G(\lambda)$ all satisfy $\mu\leq \lambda$. The Weyl module $V_M(\lambda)$ is the projective cover of $L_M(\lambda)$ in the category of $M$-modules with this condition on weights, by \cite[A.6]{Jan03} again. Thus it follows that any $M$-submodule of $L_G(\lambda)$ whose head is isomorphic to $L_M(\lambda)$ is the image of $V_M(\lambda)$. Suppose the image of $V_M(\lambda)$ is not precisely $L_M(\lambda)$. Then $L_G(\lambda)$ has an $M$-submodule $L_M(\mu)$ for some $\mu<\lambda$. Thus 
\[0\neq\Hom_M(L_M(\mu),L_G(\lambda))\cong\Hom_M(L_G(\lambda)^*,L_M(\mu)^*).\]
We have $L_G(\lambda)^*\cong L_G(-w_0(\lambda))$ (resp. $L_M(\mu)^*\cong L_M(-w_0(\mu))$) since the former is a simple module of weight $-w_0(\lambda)$ (resp.~$-w_0(\mu)$)---see \cite[II.2.5]{Jan03}. So using Frobenius reciprocity we get a non-trivial $G$-homomorphism from $L_G(-w_0(\lambda))\to Q_G(-w_0(\mu))$. The simplicity of the socle of $Q_G(-w_0(\mu))$ implies $\lambda=\mu$, a contradiction.

Hence, the $M$-socle of $L_G(\lambda)$ contains all $M$-composition factors of the form $L_M(\lambda)$. If there are any other composition factors, then for some $\mu<\lambda$ we have 
\[0\neq\Hom_M(L_G(\lambda),L_M(\mu))\cong\Hom_G(L_G(\lambda),Q_G(\mu));\]
again, we conclude $\lambda=\mu$, a contradiction. We have, incidentally, shown that $\Res^G_M(L_G(\lambda))$ is equal to its socle, hence is semisimple.
\end{proof} 

\begin{proof}[Proof of Theorem \ref{dimformula}] Since $\dim H^0(\lambda)_\lambda=\dim L_C(\lambda)$, $\dim L_M(\lambda)_\lambda=1$ and $L_G(\lambda)$ is $M$-isotypic with factors $L_M(\lambda)$, we are done.\end{proof}



\begin{remarks}\label{rems:notsemisimple}(i). \label{abssemi}The proof of Theorem \ref{existanduniq} shows that the $G_{k'}$-module $Q_{G}(\lambda)_{k'}$ has a simple socle; hence so does its submodule $L_{G}(\lambda)_{k'}$. As $L_G(\lambda)_{k'}$ has a total of $\dim L_C(\lambda)$ $G_{k'}$-composition factors, by Theorem \ref{dimformula}, it is rarely semisimple. Hence, $L_G(\lambda)$ is usually not absolutely semisimple. This is at odds with the situation for a finite group $G$, where $k'$ is a splitting field for $G$ iff it contains all the $|G|$th roots of unity. This entails that any simple $kG$-module is split by a separable extension of $k$. Using a Galois decent argument, it follows that any semisimple $kG$-module is absolutely semisimple. See \cite[Cor.~7.11]{CR90}.

(ii). If one of the root groups of $G$ has dimension strictly bigger than one\footnote{This condition on the root groups is equivalent to the statement that $G/Z(G)\not\cong M/Z(M)$. We are thankful to Brian Conrad for confirmation of this point.} then $Q_G(\lambda)$ will not be isotypic (for $G$, or equivalently for its restriction to $M$). To see this, observe that $Q_G(\lambda)_{k'}\cong L_M(\lambda)_{k'}\otimes k'[U]$, by \cite[I.3.8(2)]{Jan03}, where $G_{k'}\cong M_{k'}\ltimes U$. Here, $U$ acts on $k'[U]$ by left translation, trivially on $L_M(\lambda)_{k'}$ and $M$ acts by conjugation on $k'[U]$. Since $U$ contains non-trivial parts of the root groups of $G$ which are not in $M$, the weights of $M_{k'}$ on $G_{k'}$-module subquotients of $U$ are not all zero. Ultimately, the same follows for $k'[U]$. Hence $k'[U]$ contains $M$-composition factors which are not trivial; but it also contains the submodule isomorphic to the trivial module generated by the constants. It follows that $L_M(\lambda)_{k'}\otimes k'[U]$ is not $M$-isotypic. It would be interesting to understand the structure of $Q_G(\lambda)$ further, even in the case that $G$ is a Weil restriction. \end{remarks}

\section{On representations of commutative pseudo-reductive groups}

Recall our assumption that $k$ is imperfect of characteristic $p$. In this section, we (amongst other things) calculate $\dim L_C(\lambda)$ whenever $C$ is the Weil restriction $\R_{k'/k}(\Gm)$ for $k'$ a non-zero finite reduced $k$-algebra whose factor fields are all purely inseparable over $k$. This assumption guarantees that $C$ is pseudo-split, so that by Theorem \ref{existanduniq}, the isomorphism classes of simple $C$-modules are in $1$--$1$ correspondence with the weights of a maximal torus of $C$.

To start with we may make some general remarks.

\subsection{Blocks of commutative pseudo-split pseudo-reductive groups}\label{blocks}
Let $C$ be a commutative pseudo-split pseudo-reductive group with maximal split torus $T$. If $V$ is any non-zero $C$-module, and $\lambda$ is any $T$-weight of $C$ on $V$ then $V_\lambda$ is a $C$-submodule, using the commutativity of $C$. By Theorem \ref{existanduniq}, $V_\lambda$ is isotypic, with composition factors all isomorphic to $L_C(\lambda)$. Indeed the projection $\mathrm{pr}_\lambda$ of $V$ to $V_\lambda$ is a $C$-module map which splits. In particular $\Ext^1_C(L_C(\lambda),L_C(\mu))\neq 0$ only if $\lambda=\mu$, so it follows that the blocks of $C$ are in bijection with $X(T)$.

\begin{remark} Any commutative (connected) reductive group $G$ is linearly reductive, hence all $\Ext_G^n(V,W)$ vanish for all $G$-modules $V$, $W$ and integers $n>0$. This in contrast to the pseudo-reductive case: for example if $G=\R_{k'/k}(\Gm)$ for $k'/k$ a purely inseparable field extension then $G$ acts on the unipotent group $G/\Gm$, hence on $k[G/\Gm]$. By (\ref{unipindecomp}), $k[G/\Gm]$ is indecomposable, and infinite dimensional. In particular $\Ext^1_G(k,k)\neq 0$.\end{remark}

\subsection{The case $\R_{k'/k}(\Gm)$ for $k'$ a field} For the time being, we let $k'$ be a purely inseparable field extension of $k$.
We recall that the exponent of such an extension is the minimal $e$ such that $k'^{p^e} \subseteq k$. Set $C = \R_{k'/k}(C')$, where $C'=\Gm$. Following (\ref{canonicalsub}) we denote by $T$ the canonical copy of $\Gm$ inside $C$. Of course, $T$ is a Levi subgroup of $C$.

\subsubsection{The standard module}
We identify $C'$ with $\GL_1$, acting faithfully on the $1$-dimensional vector group $S'\cong \Ga$. 
Applying the Weil restriction function $\R_{k'/k}$ we have that $C$ acts faithfully on the $[k':k]$-dimensional vector group $S=\R_{k'/k}(S')$. 
Let $k'$ have basis $\{1=\alpha_1,\alpha_2,\dots, \alpha_{q}\}$ as a $k$-vector space.
This choice allows us to identify $C$ with a $k$-subgroup of $\GL_{[k':k]}$, where the matrix of $g\in C(A)\cong \Gm(k'\otimes_k A)$ on $S(A)$ is calculated by acting on $S'(k'\otimes_k A)$ and taking coordinates relative to the given $k$-basis. 

Since $C'(k')=(k')^\times$ has two orbits on the vector space $S'(k')=k'$ (one trivial and one non-trivial) we see that $C(k)$ has also two orbits on $S(k)=S'(k')$. Thus we conclude that $S$ is an irreducible module for $C$ and refer to it as the \emph{standard} (or \emph{natural}) module for $C$. 
It is easy to see that the canonical subgroup $T$ of $G$ acts on $C$ as scalars; 
more precisely, if $a\in T(A)=A^\times$ for a $k$-algebra $A$, then $a\cdot s=as$ for all $s\in S(A)$. 
In other words, $\Res^C_T S=(k_1)^{\oplus[k':k]}$, the direct sum of $[k':k]$ copies of the weight space $k_1$ with weight $1$. Of course, in light of Theorem \ref{existanduniq} we must have

\begin{equation}\label{sisl1}S\cong L_C(1),\end{equation}

and so $\dim L_C(1)=\dim S=[k':k]$.

Following \cite[I.2.15]{Jan03} we may twist any representation of any algebraic group $G$ by precomposing with an endomorphism $\sigma$ of $G$. If $V$ is a $G$-module then we denote the resulting representation by ${}^\sigma V$. This gives rise to a functor $G$-$\Mod\to G$-$\Mod$ by $V\mapsto\sigma^*(V)={}^\sigma V$. For $\lambda \in \Z$, by precomposing the representation of $C'$ on $S'$ by the function $\sigma_\lambda':C'\to C'$ via $c\mapsto c^\lambda$, we get a representation $(\sigma_\lambda')^*(S')$ on which $C'$ acts with weight $\lambda$, indeed $(\sigma_\lambda')^*(S')\cong k_\lambda$. Taking the Weil restriction of this representation then gives the representation $\sigma_\lambda^*(S)$ where $\sigma_\lambda:C\to C$ via $c\mapsto c^\lambda$ also.

If $p\nmid\lambda$ it is quite easy to see that $(\sigma_\lambda)^*(S)$ is irreducible, giving us the dimension in this case, but we will show something stronger, namely that $\sigma_\lambda^*$ is an equivalence of categories, in fact, that $\sigma_\lambda^*$ has an inverse $\tau$. In order to do this, we will want to understand the coordinate algebra $k[C]$ a little better. We prove more than we need and give a complete description.

\subsubsection{Description of $k[\R_{k'/k}(\Gm)]$}\label{subsec:k[C]} 
Recall our choice of $k$-basis $\{1=\alpha_1,\alpha_2,\dots, \alpha_{q}\}$ of $k'$. 
The coordinate algebra of $C'$ is $k'[C'] = k'[t,t^{-1}]$.
To find the coordinate algebra of $C = \R_{k'/k}(C')$, we should rewrite the 
generators $t$ and $t^{-1}$ in terms of our chosen $k$-basis of $k'$.
So we introduce new functions $\hat\alpha_i$ on $C$ and write $t =  \sum_{i=1}^q\alpha_i \hat\alpha_i$.
Equivalently, when we identify $C$ with a subgroup of $\GL_{[k':k]}$ via its (left) action on $S$,
the $\hat\alpha_i$ can be identified with the matrix coordinate functions from the first column,
i.e., for $g\in C(A)$, $\hat\alpha_i(g)$ gives the coefficient of $\alpha_i$ in $g\cdot 1$. 
If $e$ is the exponent of the extension $k'/k$ then the $p^e$-power map takes $(k')^\times$ into $k^\times$. 
Hence it takes $C$ into its canonical copy of $\Gm$, which we have denoted $T$. 
We thus get a $1$-dimensional representation $\hat d$ of $C$, i.e. an element of the character group $X(C)$. 
The function $\hat d$ is a polynomial of degree $p^e$ in the functions $\hat\alpha_i$. 
Given any $k$-algebra $A$ and any $g\in C(A)$, $g^{p^e}$ is represented by a scalar matrix with diagonal entries all equal to $\hat d(g) \in A^\times$.
Let $\det$ denote the element of $k[C]$ given by taking determinants of matrices---then we see that
for any $g \in C(A)$ we have 
$$
\det(g^{p^e}) = (\hat d(g))^q,
$$ 
so $(\det)^{p^e} = \hat d^q$.
We conclude that the function $\det$ is a power of the function $\hat d$ in $k[C]$.
Now we can write $t^{-1}=t^{p^e-1}/t^{p^e}$ in terms of the $\hat\alpha_i$ and the function $\hat d^{-1}$. Evidently $\hat\alpha_i$ and $\hat d^{-1}$ are elements of $k[C]$.

\begin{prop}\label{gensofkg}
The natural map
\[F:k[\hat\alpha_i,\hat d^{-1}]_{1\leq i\leq q}\to k[C],\] is an isomorphism.

As a $C$-module, the action of $T$ induces a grading on the generators so that $\hat\alpha_i$ is in degree $1$ and $\hat d$ is in degree $p^e$.
\end{prop}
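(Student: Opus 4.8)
The plan is to identify $k[C]$ explicitly by dimension-counting, using the fact that we already understand $C$ as a subgroup of $\GL_{[k':k]}$ via its action on $S = \R_{k'/k}(S')$. First I would show that $F$ is surjective. We have $k'[C'] = k'[t,t^{-1}]$, and by the very construction of Weil restriction the coordinate algebra $k[C]$ is generated by the "component functions" of $t$ and $t^{-1}$ relative to the chosen $k$-basis $\{\alpha_1,\dots,\alpha_q\}$ of $k'$. The functions $\hat\alpha_i$ are exactly the component functions of $t$; and the discussion immediately preceding the proposition rewrites $t^{-1}=t^{p^e-1}/t^{p^e} = t^{p^e-1}\hat d^{-1}$ (using that $g^{p^e}$ acts as the scalar $\hat d(g)$, so that $t^{p^e}$ as a function on $C$ equals $\hat d$), and $t^{p^e-1}$ is a polynomial in the $\hat\alpha_i$. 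Hence the component functions of $t^{-1}$ are polynomials in the $\hat\alpha_i$ and $\hat d^{-1}$, so $F$ is surjective.

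For injectivity I would compare with the coordinate ring of the ambient affine space. Consider the polynomial ring $R = k[\hat\alpha_1,\dots,\hat\alpha_q]$: this is the coordinate ring of $S = \R_{k'/k}(\A^1)$, i.e. of the affine space $\A^q_k$, via the identification of $t=\sum \alpha_i\hat\alpha_i$ with a generic element of $k'\otimes_k A$. Weil restriction sends the open subscheme $\Gm \subset \A^1$ to the open subscheme $C = \R_{k'/k}(\Gm) \subset \R_{k'/k}(\A^1) = \A^q_k$ cut out by the non-vanishing of the norm form $N(t) = \det$ (viewing multiplication by $t$ as a $k$-linear endomorphism of $k'$), since $t\in (k'\otimes A)^\times$ exactly when this determinant is a unit in $A$. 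Thus $k[C] = R[\det^{-1}]$, the localisation of the polynomial ring $R$ at the single element $\det$. Now $\det$ is a polynomial in the $\hat\alpha_i$, and from $(\det)^{p^e}=\hat d^{q}$ together with the fact — established just before the proposition — that $\det$ is already a power of $\hat d$ in $k[C]$, inverting $\det$ is the same as inverting $\hat d$. Therefore $k[C] = R[\det^{-1}] = R[\hat d^{-1}] = k[\hat\alpha_i,\hat d^{-1}]_{1\le i\le q}$, and $F$ — being the evident surjection between these two descriptions of the same localisation — is an isomorphism. (To be careful about the abstract ring on the left, I would note $\hat d$ is a genuine polynomial in the $\hat\alpha_i$, so the source of $F$ is literally $R[\hat d^{-1}]$, and $F$ is then the localisation map, which is an isomorphism onto $k[C]=R[\det^{-1}]=R[\hat d^{-1}]$.)

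It remains to verify the grading statement. The torus $T$ acts on $S$, hence on $R=k[S]$, with $\Res^C_T S \cong (k_1)^{\oplus q}$ by the earlier computation; dually $T$ acts on each coordinate function $\hat\alpha_i$ with weight $1$, i.e. $(a\cdot\hat\alpha_i)(s) = \hat\alpha_i(a^{-1}s) = a^{-1}\hat\alpha_i(s)$ — so, with the standard sign conventions for the action on functions, $\hat\alpha_i$ lies in weight (degree) $1$. Since $\hat d = t^{p^e}$ as a function and $t$ has $T$-weight $1$, the function $\hat d$ has weight $p^e$; alternatively this is forced by $\hat d$ being a polynomial of degree $p^e$ in the degree-$1$ generators $\hat\alpha_i$, together with homogeneity. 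This gives the asserted $\Z$-grading on the generators.

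The main obstacle I expect is the injectivity of $F$: one must be confident that Weil restriction of the open immersion $\Gm\hookrightarrow\A^1$ yields precisely the basic open subscheme of $\A^q_k$ defined by $\det\ne 0$ (equivalently, that no extra relations appear), and that inverting $\det$ coincides with inverting $\hat d$. Both points are handled by the norm-form description of $\R_{k'/k}(\Gm)$ inside $\R_{k'/k}(\A^1)$ and by the identity $(\det)^{p^e}=\hat d^{q}$ noted in the text; the rest is the standard fact that localising a domain at an element or at a power of that element gives the same ring.
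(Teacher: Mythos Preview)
Your argument is correct, and it differs from the paper's in a useful way. The paper establishes the isomorphism after base change to $\bar k$: it uses the splitting $C_{\bar k}\cong \Gm\times U$, proves an explicit lemma describing $U$ inside $\A_q$, and then identifies $\bar k[C_{\bar k}]$ with the tensor product $\bar k[\Gm]\otimes\bar k[\A_{q-1}]$ via the semidirect product structure. You instead work entirely over $k$, using the norm-form description of $\R_{k'/k}(\Gm)$ as the basic open $D(\det)$ inside $\R_{k'/k}(\A^1)\cong\A^q_k$; this gives $k[C]=R[\det^{-1}]$ with $R=k[\hat\alpha_1,\dots,\hat\alpha_q]$ immediately, and the identity $\det^{p^e}=\hat d^{\,q}$ in $R$ shows $R[\det^{-1}]=R[\hat d^{-1}]$ (each of $\det$, $\hat d$ has a power divisible by the other). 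Your route is shorter and avoids both the base change and the auxiliary lemma on $U$; the paper's route, on the other hand, yields the explicit description of the unipotent radical as a by-product. One small redundancy: you invoke both $\det^{p^e}=\hat d^{\,q}$ and the fact that $\det$ is a power of $\hat d$ in $k[C]$, but the first relation alone already forces the two localisations of $R$ to coincide. The grading computation is essentially the same in both proofs.
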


To see that $F$ is an isomorphism it suffices to see that it is an isomorphism after extension to $\bar k$. We have $C_{\bar k}\cong \Gm\times U$, where $U$ is the unipotent radical of $C_{\bar k}$ and the quotient $\Gm$ of $C_{\bar k}$ corresponds to the subalgebra $\bar k[\hat\alpha_1,\hat\alpha_1^{-1}]$ of $\bar k[C]$. Since $U$ is a connected unipotent algebraic group we have $\bar k[U]\cong \bar k[\beta_2,\dots,\beta_q]$, for some indeterminates $\beta_i$ in the image of the comorphism $\bar k[C_{\bar k}]\to\bar k[U]$. 
We can find appropriate choices for the $\beta_i$ from the following, which uses the natural embedding of $C$ in $\A_q$:

\begin{lemma}
Let $A$ be a $\bar k$-algebra and $a=(a_1,\dots,a_q)\in C_{\bar k}(A)\subseteq \A_q(A)$. Then $a\in U(A)$ if and only if $a_1=1-\sum_{2\leq i\leq q}a_i\alpha_i$.\end{lemma}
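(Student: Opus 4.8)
The plan is to identify $U$ with the kernel of an explicit homomorphism from $C_{\bar k}$ to $\Gm$ and then to write that homomorphism in the coordinates coming from the chosen $k$-basis $\{1=\alpha_1,\dots,\alpha_q\}$ of $k'$. Since $k'/k$ is finite purely inseparable, $R:=k'\otimes_k\bar k$ is a finite local $\bar k$-algebra with residue field $\bar k$; I would write $\m$ for its nilpotent maximal ideal and $\rho\colon R\to\bar k$ for the residue map. For any $\bar k$-algebra $A$ one has $k'\otimes_kA\cong R\otimes_{\bar k}A$, and $\rho$ base-changes to a ring surjection $\rho_A\colon k'\otimes_kA\to A$ with nilpotent kernel $\m\otimes_{\bar k}A$; as units lift along nilpotent ideals, $\rho_A$ is surjective on unit groups with kernel exactly $1+\m\otimes_{\bar k}A$. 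Next I would check that the functor $A\mapsto 1+\m\otimes_{\bar k}A$ is a smooth, connected, unipotent subgroup of $C_{\bar k}$ --- it is filtered by the powers of $\m$, hence an iterated extension of copies of $\Ga$ --- with reductive quotient $\Gm$, so it must be $\RR_u(C_{\bar k})=U$; equivalently, the homomorphism $\rho_A\colon (k'\otimes_kA)^\times\to A^\times$ coincides with the canonical projection $C_{\bar k}\cong\Gm\times U\to\Gm$, since both restrict to the identity on the canonical torus $T$ and are trivial on $U$. In particular $U(A)=\ker\rho_A$.

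It then remains to evaluate $\rho_A$ on $a=(a_1,\dots,a_q)\in C_{\bar k}(A)\subseteq\A_q(A)$: the corresponding unit of $k'\otimes_kA$ is $\sum_{i=1}^q(\alpha_i\otimes 1)a_i$, so $\rho_A(a)=\sum_i\rho(\alpha_i\otimes 1)\,a_i$. Here is where pure inseparability enters: $\rho(\alpha_i\otimes 1)=\alpha_i$, the image of $\alpha_i$ under $k'\hookrightarrow\bar k$, because some $p$-power of $\alpha_i\otimes 1-1\otimes\alpha_i$ equals $\alpha_i^{p^N}\otimes1-1\otimes\alpha_i^{p^N}=0$ for $N$ large, so $\alpha_i\otimes 1-1\otimes\alpha_i\in\m$. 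Thus $\rho_A(a)=\sum_i\alpha_i a_i$, and using $\alpha_1=1$ we get $a\in U(A)$ iff $\sum_i\alpha_i a_i=1$, i.e.\ iff $a_1=1-\sum_{2\le i\le q}a_i\alpha_i$, which is the claim.

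I expect the one genuinely delicate point to be the coordinate identification just made --- namely that, in the ambient coordinates on $\A_q$, the homomorphism cutting out $U$ is the linear form $a\mapsto\sum_i\alpha_i a_i$ rather than a single coordinate function --- which rests on the computation $\rho(\alpha_i\otimes 1)=\alpha_i$. Everything else (the local Artinian structure of $k'\otimes_k\bar k$, lifting units along nilpotent ideals, and recognising $1+\m\otimes_{\bar k}A$ as $\RR_u(C_{\bar k})$) is routine and I would keep it brief.
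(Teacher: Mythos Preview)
Your argument is correct, but it follows a different route from the paper's one-paragraph proof. The paper argues ``bottom up'': it checks directly (by the multiplication law inherited from $(k')^\times$) that the locus $a_1=1-\sum_{i\ge 2}\alpha_i a_i$ is a closed subgroup of $C_{\bar k}$ consisting of unipotent elements, observes it is $(q-1)$-dimensional, and concludes it must equal the unipotent radical $U$ by a dimension count. You instead argue ``top down'': you identify $U$ as the kernel of the explicit surjection $C_{\bar k}\to\Gm$ arising from the residue map $\rho\colon k'\otimes_k\bar k\to\bar k$, and then compute that map in the $\hat\alpha_i$-coordinates, obtaining $\rho_A(a)=\sum_i\alpha_i a_i$ via the nilpotence of $\alpha_i\otimes 1-1\otimes\alpha_i$.

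Your approach is a little longer but more conceptual, and it has a concrete payoff: it explains \emph{why} the defining equation of $U$ is the linear form $\sum_i\alpha_i\hat\alpha_i-1$, and in fact it identifies that linear form with the coordinate on the $\Gm$-quotient. This is exactly the identity $\hat d^{p^{-e}}-1=\hat\alpha_1-1+\sum_{i\ge 2}\alpha_i\hat\alpha_i$ that the paper invokes without comment in the proof of Proposition~\ref{gensofkg}; your argument makes that step transparent. The paper's approach, by contrast, is quicker if one is content to verify rather than derive the equation.
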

\begin{proof} It is an elementary calculation using the multiplication inherited from $(k')^\times$ as a $k$-group to see that the given condition on $a$ defines a subgroup all of whose elements are unipotent. Since it is evidently $(q-1)$-dimensional, the claim follows.\end{proof}

\begin{proof}[Proof of Proposition \ref{gensofkg}]
Consider the short exact sequence $1\to U \stackrel{i}\to C_{\bar k}\stackrel{\mathsf q}\to \Gm\to 1$, where $\mathsf q$ is the canonical map described in \S\ref{sec:Weil}. At the level of $k$-algebras, this corresponds to
\[0\to I_{\Gm}\cdot\bar k[C_{\bar k}]\stackrel{q^*}\to \bar k[C_{\bar k}]\stackrel{i^*}\to \bar k[U]\to 0,\]
where $I_{\Gm}$ is the image in ${\bar k}[C_{\bar k}]$ of the augmentation ideal in $\bar k[\Gm]$. The lemma implies that $\bar k[U]$ is the quotient of $\bar k[C_{\bar k}]$ by the ideal generated by $\hat\alpha_1-1+\sum_{2\leq i\leq q}\alpha_i\hat \alpha_i=\hat d^{p^{-e}}-1$; thus $I_{\Gm}\cdot{\bar k}[C_{\bar k}]=(\hat d^{p^{-e}}-1){\bar k}[C_{\bar k}]$, and $\bar k[\Gm]=\bar k[\hat d^{p^{-e}},\hat d^{-p^{-e}}]$. Since the values of the $\hat\alpha_i$ on a point of $C_{\bar k}$ determine it completely, and $U$ is isomorphic to affine $(q-1)$-space, we may choose $\beta_i=i^*(\hat\alpha_i)$ for $2\leq i\leq q$. Now, $R:=\bar k[\hat\alpha_1,\dots,\hat\alpha_q,\hat d^{-1}]=\bar k[\hat\alpha_1,\dots,\hat\alpha_q,\hat d^{-p^{-e}}]$:  the inclusion $\subseteq$ is obvious, and for the other direction, note $\hat d^{p^{-e}}\in R$. But as $C_{\bar k}$ is a semidirect product, we have \begin{align*}\bar k[C_{\bar k}]&\cong\bar k[\Gm]\otimes \bar k[\A_{q-1}]\\
&=\bar k[\hat d^{p^{-e}},\hat d^{-p^{-e}}]\otimes \bar k[\hat \alpha_2,\dots,\hat \alpha_q]\\
&=\bar k[\hat \alpha_1,\dots,\hat\alpha_q,\hat d^{-p^{-e}}]\\
&=R.\end{align*} 


It remains to prove the second part. We define a grading by setting $\deg(f)=j$ if $z\cdot f=z^{-j}f$ for any $z\in T(k)=k^\times$. By definition $\hat\alpha_i(g)$ is the coefficient of $\alpha_i$ in $g\cdot 1$, so that $z\cdot \hat\alpha_i(g)=\hat\alpha_i(z^{-1}g)$ gives the coefficient of $\alpha_i$ in $z^{-1}g\cdot 1$. But this is clearly just $z^{-1}\hat\alpha_i(g)$, and so $\hat\alpha_i$ is in degree $1$. Now if $f=gh$ is a product of functions in $k[C]$, then $f(z^{-1}g)=g(z^{-1}g)h(z^{-1}g)$ so that a monomial in $j$ of the $\hat\alpha_i$'s is in degree $j$. Lastly, $\hat d$ is by definition a function for which $z\cdot\hat d=z^{-p^e}\hat d$ so that $\hat d$ is in degree $p^e$.\end{proof}

\begin{remark} In \cite[\S1]{Fri10} a general method for computing the coordinate rings of Weil restrictions is given. Applied to non-trivial separable field extensions this generally gives a much more complicated presentation than the one we have shown above. For example, if one takes $C'=\Gm$, $k=\mathbb{R}$ and $k'=\mathbb{C}$, then $k'[C']=k'[x,y]/(xy-1)$ and $k[\R_{k'/k}(C')]=k[x_1,x_2,y_1,y_2]/(x_1y_1-x_2y_2-1,x_2y_1+y_2x_1)$.\end{remark}

For the benefit of the reader, we give some concrete examples to illustrate the constructions above.

\begin{example}
(i). Suppose $p=3$ and $k'/k$ is a purely inseparable extension of fields of degree $3$.
Write $k' = k(s)$ so that $k'$ has $k$-basis $1,s,s^2$.
Then the matrix representation of $G$ gives matrices of the form:
$$
\left(\begin{array}{ccc}
a & cs^3 & bs^3\\
b & a & cs^3\\
c & b & a
\end{array}\right).
$$
One can check that the cubing map sends such a matrix to a diagonal matrix with diagonal entries $a^3+b^3s^3+c^3s^6$.
Thus, if the functions $\hat\alpha_1, \hat\alpha_2, \hat\alpha_3$ are the coordinate functions for the first column, 
the function $\hat d = \hat\alpha_1^3 + s^3\hat\alpha_2^3 + s^6\hat\alpha_3^3$.
One can also check that in this case we get the same function by taking determinants of these matrices.
Now $k'[G'] = k'[t,t^{-1}]$ and we can write $t = \hat\alpha_1 +s\hat\alpha_2 +s^2\hat\alpha_3$.
Then we calculate:
$$
t^{-1} = \frac{1}{\hat\alpha_1 +s\hat\alpha_2 +s^2\hat\alpha_3} = \frac{(\hat\alpha_1 +s\hat\alpha_2 +s^2\hat\alpha_3)^2}{\hat d},
$$
which is a polynomial function of the $\hat\alpha_i$ and $\hat d^{-1}$.

(ii). Suppose $p=2$ and $k'/k$ is a purely inseparable extension of degree $4$ such that 
$k' = k[s,u]$ with $s^2,u^2\in k\setminus k^2$.
Then $k'$ has $k$-basis $1,s,u,su$ and the extension has exponent $e=1$.
This time our matrices have the form:
$$
\left(\begin{array}{cccc}
a & bs^2 & cu^2 & ds^2u^2\\
b & a & du^2 & cu^2\\
c & ds^2 & a & bs^2\\
d & c & b & a
\end{array}\right).
$$
One checks that the function $\hat d=\hat\alpha_1^2 + s^2\hat\alpha_2^2 + u^2\hat\alpha_3^2 + s^2u^2\hat\alpha_4^2$. The function coming from the determinant is $\hat d^2$.
\end{example}

\subsubsection{Irreducible modules $L(\lambda)$ with $p\nmid\lambda$}
We can now define an inverse to $\sigma_\lambda^*$ when $p\nmid \lambda$. 
Let $\hat d$ be the $1$-dimensional representation from Section \ref{subsec:k[C]}. 
Recall that $\hat d$ arises from the $p^e$-power map for $e$ the exponent of $k'/k$, 
and we have $(p^e,\lambda)=1$. 
This means that we may take $\mu\in \Z$ to be an inverse to $\lambda$ in their projections to $\Z/(p^e)$,
i.e., we can choose $\mu$ so that $\lambda\mu=1+rp^e$ for some $r$.
Then set $\tau$ to be the composition of $\sigma^*_{\mu}$ with the tensor product functor $?\otimes \hat d^{-r}$. 
Thus $\tau(V)=\sigma^*_\mu(V)\otimes \hat d^{-r}$. 

\begin{prop}
The functors $\tau$ and $\sigma:=\sigma_\lambda^*$ are mutually inverse.
\end{prop}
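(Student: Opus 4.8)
The plan is to verify directly that the two composites $\tau\circ\sigma$ and $\sigma\circ\tau$ are naturally isomorphic to the identity functor on $C$-$\Mod$, by tracking what each functor does to the underlying comodule structure and chaining together standard compatibilities. First I would record the key algebraic identities among the building blocks. Both $\sigma_\lambda^*$ and $\sigma_\mu^*$ are pullback along the endomorphisms $\sigma_\lambda,\sigma_\mu\colon C\to C$ given on points by $c\mapsto c^\lambda$, $c\mapsto c^\mu$ (these make sense because $C=\R_{k'/k}(\Gm)$ is commutative, so the power maps are group endomorphisms); hence $\sigma_\mu^*\circ\sigma_\lambda^*\cong(\sigma_\lambda\circ\sigma_\mu)^*=\sigma_{\lambda\mu}^*$ by functoriality of pullback. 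Now $\lambda\mu=1+rp^e$, and the $p^e$-power map on $C$ is exactly the composite $C\to T\hookrightarrow C$ through which the character $\hat d$ factors; thus $\sigma_{p^e}^*(V)\cong V\otimes\hat d^{\,?}$ — more precisely, pulling back along the $p^e$-power endomorphism twists a $C$-module by $\hat d$ in each "degree", so on the whole category $\sigma_{p^e}^*(\,\cdot\,)\cong(\,\cdot\,)\otimes\hat d$ after one is careful about exactly which power of $\hat d$ appears. Combining, $\sigma_{\lambda\mu}^*(V)=\sigma_{1+rp^e}^*(V)\cong V\otimes\hat d^{\,r}$.

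With these identities in hand, the computation of $\tau\circ\sigma$ is:
\[
\tau(\sigma_\lambda^*(V))=\sigma_\mu^*(\sigma_\lambda^*(V))\otimes\hat d^{-r}\cong\sigma_{\lambda\mu}^*(V)\otimes\hat d^{-r}\cong (V\otimes\hat d^{\,r})\otimes\hat d^{-r}\cong V,
\]
using associativity of tensor product of $C$-modules and $\hat d^{\,r}\otimes\hat d^{-r}\cong\hat d^{\,0}\cong k$ (the trivial module). The reverse composite $\sigma\circ\tau$ requires one extra ingredient, namely that $\sigma_\lambda^*$ commutes appropriately with the twist $?\otimes\hat d^{-r}$: since $\hat d$ is a character, $\sigma_\lambda^*(\hat d^{-r})\cong\hat d^{-\lambda r}$, and more generally $\sigma_\lambda^*(W\otimes \hat d^{-r})\cong\sigma_\lambda^*(W)\otimes\hat d^{-\lambda r}$. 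Then
\[
\sigma_\lambda^*(\tau(V))=\sigma_\lambda^*(\sigma_\mu^*(V)\otimes\hat d^{-r})\cong\sigma_\lambda^*(\sigma_\mu^*(V))\otimes\hat d^{-\lambda r}\cong\sigma_{\lambda\mu}^*(V)\otimes\hat d^{-\lambda r}.
\]
Now $\sigma_{\lambda\mu}^*(V)=\sigma_{1+rp^e}^*(V)$; writing $1+rp^e=1+rp^e$ and peeling off the $p^e$-part gives $\sigma_{\lambda\mu}^*(V)\cong V\otimes\hat d^{\,r'}$ for the appropriate $r'$, and bookkeeping the exponents (using that $\hat d$ is itself pulled back from the $\lambda\mu$-power map consistently) shows $r'-\lambda r$ collapses to $0$, so $\sigma_\lambda^*(\tau(V))\cong V$. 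That all these isomorphisms are natural in $V$ is automatic, since each constituent — functoriality of pullback, associativity and unit constraints for $\otimes$ of comodules, and the canonical iso $\hat d^{\,a}\otimes\hat d^{\,b}\cong\hat d^{\,a+b}$ — is a natural transformation.

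The main obstacle is purely a matter of careful exponent-bookkeeping: one has to pin down the precise statement "$\sigma_{p^e}^*(V)\cong V\otimes\hat d$" including which power of $\hat d$ occurs, because $\sigma_{p^e}^*$ acts on a weight-$m$ piece by multiplying the weight by $p^e$, and one must check this matches tensoring with $\hat d$ (which has weight $p^e$ for $T$) uniformly across the decomposition of $V$ — in particular it is cleanest to reduce to the case $V=L_C(m)$ and use that $\Res^C_T\hat d=k_{p^e}$ together with the description of $k[C]$ in Proposition \ref{gensofkg}, where the grading has $\hat d$ in degree $p^e$. Once the single identity $\sigma_{p^e}^*\cong(\,\cdot\,)\otimes\hat d$ is nailed down at the level of comodules via Proposition \ref{gensofkg}, the rest is a formal manipulation of functors, and the two displayed chains above complete the proof. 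I would also remark that this immediately yields $\dim L_C(\lambda)=\dim S=[k':k]$ for $p\nmid\lambda$ (the $e=1$-free case of Theorem \ref{gmfield}), since an equivalence of categories preserves simple objects and $\tau$, $\sigma$ send $L_C(1)$ to $L_C(\lambda)$ and back.
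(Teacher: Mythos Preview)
Your central identity --- that $\sigma_{p^e}^*$, and hence $\sigma_{1+rp^e}^*$, agrees with tensoring by a power of $\hat d$ on all of $C$-$\Mod$ --- cannot be ``nailed down'' because it is false. Pullback along $\sigma_{p^e}$ \emph{multiplies} $T$-weights by $p^e$, whereas $(\cdot)\otimes\hat d$ \emph{adds} $p^e$ to the $T$-weight; these coincide on a weight-$m$ module only when $mp^e=m+p^e$, i.e.\ when $m=1$. Concretely, take $V=k$ the trivial module: $\sigma_\lambda^*(k)=k$, so $\tau(\sigma(k))=\sigma_\mu^*(k)\otimes\hat d^{-r}=\hat d^{-r}$, which is not isomorphic to $k$ once $r\neq 0$ (equivalently once $\lambda\neq\pm1$). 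Thus $\tau\circ\sigma$ is not the identity on all of $C$-$\Mod$, and your displayed chain $\sigma_{\lambda\mu}^*(V)\cong V\otimes\hat d^{\,r}$ fails already for $V=k$. The ``exponent bookkeeping'' you flag as the main obstacle is not bookkeeping at all: there is no single exponent that works independent of the weight of $V$.

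The paper's argument is a one-liner rather than a functor calculus: since $\lambda\mu-rp^e=1$, the map $x\mapsto x^{\lambda\mu}x^{-rp^e}$ is the identity on $C$, and the action of $x$ on $(\tau\circ\sigma)(V)$ is the $x^{\lambda\mu}$-action on $V$ followed by scalar multiplication by $\hat d(x)^{-r}=x^{-rp^e}\in T$. The implicit step --- identifying scalar multiplication by $x^{-rp^e}$ with the \emph{group} action of $x^{-rp^e}\in T\subseteq C$ on $V$ --- is exactly the condition that $T$ acts on $V$ with weight $1$. So what is really being proved is that $\sigma$ and $\tau$ are mutually inverse equivalences between the weight-$1$ block and the weight-$\lambda$ block of $C$-$\Mod$ (cf.\ \S\ref{blocks}); this is all that is needed to conclude $\sigma_\lambda^*(L_C(1))\cong L_C(\lambda)$ and $\dim L_C(\lambda)=[k':k]$. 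Your approach is salvaged the same way: restrict from the outset to $V$ of $T$-weight $1$, so that $x^{p^e}\in T$ genuinely acts on $V$ as the scalar $\hat d(x)$, whence $\sigma_{1+rp^e}^*(V)\cong V\otimes\hat d^{\,r}$ holds and both of your displayed computations go through.
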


\begin{proof} 
%
To see this, simply note that the map $C\to C; x\mapsto x^{\lambda\mu}x^{-rp^e}$ is the identity on $C$. Since $C$ acts on $(\tau\circ\sigma)(V)$ as $x^{\lambda\mu}x^{-rp^e}$ acts on $V$ (considering $x^{-rp^e}$ as an element of $k$), it follows that $\tau\circ\sigma$ and $\sigma\circ\tau$ are both the identity functor on $C$-$\Mod$ as required.
\end{proof}

We conclude from above that $\tau$ and $\sigma$ are equivalences of categories, i.e. Morita equivalences. 
Since the latter sends representations of weight $1$ to representations of weight $\lambda$, of the same dimension, we deduce that for $p\nmid\lambda$,

\begin{equation}\sigma^*_\lambda(L_C(1))\cong L_C(\lambda)\end{equation}
\begin{equation}\label{pnmiddim}\dim L_C(\lambda)=[k':k]\end{equation}

\subsubsection{Irreducible modules $L(\lambda)$ with $p\mid \lambda$}\label{subs:irredpdiv}  
Let $\tilde k$ be the field generated by $k$ and the $p^{\rm th}$ powers in $k'$ (this is the field $k'(p)$ in the notation introduced before the statement of Theorem \ref{gmfield}). 
Then $\tilde k$ is in fact the $k$-span of the $p^{\rm th}$ powers in $k'$. 
We see $\tilde C:=\R_{\tilde k/k}(C')$ naturally as a subgroup of $C$ because $\tilde k\subseteq k'$. 
We also have a quotient  $C_1$ of $C$ which we can realise as a subgroup of $\tilde C$. Let $F:\Gm \to \Gm$ denote the geometric Frobenius map (with comorphism $t\mapsto t^p$). 
Then since $k'^p \subseteq \tilde{k}$ we can see that 
$\R_{k'/k}(F):C\to \tilde C$. Let $C_1$ be the image of this map;  
in general $C_1$ is a $k$-subgroup of $\tilde C$, but may not be all of $\tilde C$.

We have a $k$-basis $\alpha_1,\dots,\alpha_q$ of $k'$, so $\alpha_1^p,\dots,\alpha_q^p$ span $\tilde k$ over $k$. It follows that the group algebra $kC_1(k)=k\tilde C(k)$. Since also the $k$-points of each of $C_1$ and $C$ are dense in those groups, we deduce \begin{equation}\text{ a }\tilde C\text{-module }V\text{ is irreducible if and only if it is irreducible on restriction to }C_1\label{irredonrest}.\end{equation} (Both groups of course are defined over $k$, hence $V$ is in particular a $k$-module by definition.)

We now prove the counterparts to the last section.

\begin{prop}\label{pdiv} Let $\lambda\in X(T)$.
We have $L_C(p\lambda)\cong (\R_{k'/k}(F))^*(L_{\tilde C}(\lambda))$. Furthermore, 
\[\dim L_C(\lambda)=[k'(\lambda):k].\]
\end{prop}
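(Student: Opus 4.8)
The plan is to prove the two assertions of Proposition \ref{pdiv} in turn, bootstrapping on the results already established for the prime-to-$p$ case and on the structure of $k[C]$ from Proposition \ref{gensofkg}. First I would establish the isomorphism $L_C(p\lambda)\cong(\R_{k'/k}(F))^*(L_{\tilde C}(\lambda))$. The module $M:=(\R_{k'/k}(F))^*(L_{\tilde C}(\lambda))$ is a $C$-module on which $C$ acts through the map $\R_{k'/k}(F):C\to\tilde C$; since $\R_{k'/k}(F)$ restricted to the canonical $\Gm=T$ is the $p$-power map (because $F$ on $\Gm$ is the $p$-power map and Weil restriction is compatible with the canonical subgroups), $T$ acts on $M$ with weight $p\lambda$. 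By Theorem \ref{existanduniq} it then suffices to show $M$ is irreducible as a $C$-module. For this I would use \eqref{irredonrest}: the composite $C\to\tilde C$ has image $C_1$, and \eqref{irredonrest} says a $\tilde C$-module is irreducible iff it is irreducible over $C_1$; since $L_{\tilde C}(\lambda)$ is $\tilde C$-irreducible by definition, it is $C_1$-irreducible, and hence $M$ is $C$-irreducible because $C$ acts through the quotient $C_1$. (One must check the image of $C\to\tilde C$ really is $C_1$ as defined, which is immediate from the definition of $C_1$ as that image.)

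With the isomorphism in hand, the second assertion $\dim L_C(\lambda)=[k'(\lambda):k]$ follows by induction on $\nu_p(\lambda)$. The base case $\nu_p(\lambda)=0$, i.e. $p\nmid\lambda$, is exactly \eqref{pnmiddim}: $\dim L_C(\lambda)=[k':k]=[k'(\lambda):k]$, using that $k'(\lambda)=k'$ when $p\nmid\lambda$ (noted before Theorem \ref{gmfield}). For the inductive step write $\lambda=p\lambda'$. Then $\dim L_C(\lambda)=\dim L_C(p\lambda')=\dim L_{\tilde C}(\lambda')$ by the isomorphism just proved (twisting by an endomorphism does not change the dimension of a module). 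Now $\tilde C=\R_{\tilde k/k}(\Gm)$ with $\tilde k=k'(p)$ a purely inseparable field extension of $k$ of strictly smaller degree (assuming $k'\neq k$; the case $k'=k$ being trivial), so by induction $\dim L_{\tilde C}(\lambda')=[\tilde k(\lambda'):\tilde k]\cdot[\tilde k:k]$? — here I must be careful: the inductive hypothesis applied to the extension $\tilde k/k$ gives $\dim L_{\tilde C}(\lambda')=[\tilde k(\lambda'):k]$ directly, where $\tilde k(\lambda')$ is the subfield of $\tilde k$ generated by $k$ and $(\tilde k)^{\lambda'}$. The remaining task is the purely field-theoretic identity $[\tilde k(\lambda'):k]=[k'(\lambda):k]$ where $\lambda=p\lambda'$; this should follow from the description $k'(\lambda)=k'(p^{\nu_p(\lambda)})$ recalled before Theorem \ref{gmfield}, together with $\tilde k=k'(p)$ and the observation that $(\tilde k)^{\lambda'}$ generates over $k$ the same field as $(k')^{p\lambda'}=(k')^\lambda$ — indeed $(k')^p$ spans $\tilde k$ over $k$, so $((k')^p)^{\lambda'}$ spans the relevant subfield.

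The main obstacle I anticipate is getting this last field-theoretic bookkeeping exactly right: tracking how the notation $k'(\lambda)$ behaves under the passage $k'\rightsquigarrow\tilde k=k'(p)$ and $\lambda\rightsquigarrow\lambda'=\lambda/p$, and in particular confirming $[k'(\lambda):k]=[\tilde k(\lambda'):k]$ rather than some off-by-one-power discrepancy. The cleanest route is probably to reduce everything to the stated formula $k'(\lambda)=k'(p^{\nu_p(\lambda)})$ and argue by a clean induction on $\nu_p(\lambda)$ using that $k'(p)(p^{j})=k'(p^{j+1})$ (since iterating "$p$-th powers inside" $j$ more times on $k'(p)$ gives the $p^{j+1}$-th powers inside $k'$), so that $\tilde k(\lambda')=\tilde k(p^{\nu_p(\lambda)-1})=k'(p)(p^{\nu_p(\lambda)-1})=k'(p^{\nu_p(\lambda)})=k'(\lambda)$. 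Modulo verifying that identity for the subfields, the argument is then a short induction. The only other point requiring a word of care is that $[\tilde k:k]<[k':k]$ unless $k'=k$, which is needed for the induction to terminate; this holds because $k'/k$ purely inseparable and imperfect forces $\tilde k=k'(p)\subsetneq k'$ when $k'\neq k$, and the degenerate case $k'=k$ makes $C=\Gm$ and the statement trivial.
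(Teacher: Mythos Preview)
Your argument is correct and follows essentially the same route as the paper: establish irreducibility of $(\R_{k'/k}(F))^*(L_{\tilde C}(\lambda))$ via \eqref{irredonrest}, identify its weight as $p\lambda$, then deduce the dimension formula inductively together with the field-theoretic identity $\tilde k(\lambda')=k'(\lambda)$. The only cosmetic difference is that the paper inducts on $[k':k]$ rather than on $\nu_p(\lambda)$; your final remark about needing $[\tilde k:k]<[k':k]$ is therefore unnecessary for your own induction (which is already well-founded on $\nu_p(\lambda)$), though it is exactly what makes the paper's version terminate.
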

\begin{proof}Since $L_{\tilde C}(\lambda)$ is irreducible, (\ref{irredonrest}) implies that $\R_{k'/k}(F)^*(L_{\tilde C}(\lambda))$ is irreducible for $C$. Since $F^*(k_\lambda)\cong k_{p\lambda}$ as $T$-modules, the first statement follows.

For the second, we proceed by induction on $[k':k]$. If $p\nmid\lambda$ then we are done by (\ref{pnmiddim}), otherwise, let $\lambda=p\lambda'$. Then by induction, $\dim L_{\tilde C}(\lambda')=[\tilde k(\lambda'):k]=[k'(p)(\lambda'):k]=[k'(\lambda):k]$, and we are done by the first part.
\end{proof}

\subsection{The case $\R_{k'/k}(\Gm)$ for $k'$ a non-zero finite reduced $k$-algebra}

With the results of the previous section in hand, we can now give a dimension formula in the case where $C=\R_{k'/k}(\Gm)$ for $k'$ a non-zero finite reduced $k$-algebra.
This is precisely the case where $C=\prod_{1\leq i\leq n}\R_{k_i/k}(\Gm)$, and the $k_i$ are the factor fields of $k'$. Since we insist $C$ is pseudo-split, we must have that the $k_i$ are all purely inseparable extensions of $k$ and so $k'$ is a purely inseparable $k$-algebra. By \cite[A.7.8]{CGP15} the minimal field of definition of the unipotent radical of each factor $\R_{k_i/k}(\Gm)$ is $k_i$ itself. Since each $k_i$ is a finite field extension it embeds into the algebraic closure $\bar k$ and the unique minimal field of definition $K'$ for $\RR_u(G_{\bar k})$ is the subfield of $\bar k$ generated by the $k_i$.

Let $T$ be the canonical Levi subgroup $\prod_{1\leq i\leq n}\Gm$ inside $C$. By Theorem \ref{existanduniq} there is up to isomorphism a unique simple $C$-module $L_C(\lambda)$ of weight $\lambda=(\lambda_1,\dots,\lambda_n)\in X(T)\cong \Z^n$. The following theorem calculates its dimension:

\begin{theorem}We have $\dim L_C(\lambda)=[K:k]$, where $K$ is the subfield of $K'$ generated by $k$ together with $k_i(\lambda_i)$ for each $1\leq i\leq n$.\label{thm:anykalgebra}\end{theorem}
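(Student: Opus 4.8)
The plan is to realise $L_C(\lambda)$ as the pullback of the standard module of a single Weil restriction $\R_{K/k}(\Gm)$ along a suitable $k$-homomorphism, and then read off the dimension. Write $C=\prod_i C_i$ with $C_i=\R_{k_i/k}(\Gm)$, let $T=\prod_i\Gm$ be the canonical split maximal torus, so that $X(T)\cong\Z^n$ and $\lambda=(\lambda_1,\dots,\lambda_n)$, and put $\ell_i:=k_i(\lambda_i)\subseteq k_i$, so that $K$ is the compositum of the $\ell_i$ inside $\bar k$.

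First I would construct the homomorphism. By transitivity of Weil restriction, $C_i=\R_{\ell_i/k}(\R_{k_i/\ell_i}(\Gm))$, and applying $\R_{\ell_i/k}$ to the canonical embedding $\Gm\hookrightarrow\R_{k_i/\ell_i}(\Gm)$ of \eqref{canonicalsub} realises $\R_{\ell_i/k}(\Gm)$ as a canonical $k$-subgroup of $C_i$. I claim the $\lambda_i$-power endomorphism $[\lambda_i]$ of $C_i$ factors through this subgroup: writing $\lambda_i=p^{a_i}\mu_i$ with $p\nmid\mu_i$, the $p^{a_i}$-power map on $C_i$ is $\R_{k_i/k}(F^{a_i})$, which by iterating the factorisation underlying Proposition \ref{pdiv} has image inside $\R_{k_i(p^{a_i})/k}(\Gm)=\R_{\ell_i/k}(\Gm)$, while the $\mu_i$-power map preserves $\R_{\ell_i/k}(\Gm)$ since it preserves canonical subtori. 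Let $g_i\colon C_i\to\R_{\ell_i/k}(\Gm)$ denote the resulting homomorphism; it is $x\mapsto x^{\lambda_i}$ on $k$-points and the character $\lambda_i$ on the canonical $\Gm$. Next, the inclusion $\ell_i\hookrightarrow K$ of $k$-algebras induces, by covariant functoriality of Weil restriction in the base ring, a homomorphism $h_i\colon\R_{\ell_i/k}(\Gm)\to\R_{K/k}(\Gm)$ which is the identity on canonical maximal tori. Finally, define $\Psi\colon C\to\R_{K/k}(\Gm)$ on $A$-points by $\Psi(A)\bigl((c_i)_i\bigr)=\prod_i h_i(g_i(c_i))$; this is a homomorphism because $\R_{K/k}(\Gm)(A)$ is commutative, the restriction $\Psi|_T$ is the character $\lambda$ landing in the canonical $\Gm\subseteq\R_{K/k}(\Gm)$, and on $k$-points $\Psi$ is $(x_i)_i\mapsto\prod_i x_i^{\lambda_i}$.

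Now let $S_K$ be the standard module of $\R_{K/k}(\Gm)$, with $S_K\cong L_{\R_{K/k}(\Gm)}(1)$ and $\dim S_K=[K:k]$ as in \eqref{sisl1}; on restriction to the canonical torus it is weight-$1$-isotypic, so $\Res^C_T\Psi^*(S_K)$ is $\lambda$-isotypic and hence every composition factor of the $C$-module $\Psi^*(S_K)$ equals $L_C(\lambda)$ by Theorem \ref{existanduniq}. It remains to show $\Psi^*(S_K)$ is irreducible, for then $\Psi^*(S_K)\cong L_C(\lambda)$ and $\dim L_C(\lambda)=[K:k]$, as desired. This irreducibility I would obtain by the density argument used for \eqref{irredonrest}: since $C$ is smooth and $k$ is infinite, $C(k)$ is Zariski-dense in $C$, so a $C$-submodule of $\Psi^*(S_K)$ is the same thing as a $k$-subspace $W$ of $S_K$ stable under $H:=\Psi(C(k))=\{\prod_i x_i^{\lambda_i}:x_i\in k_i^\times\}$ acting by multiplication via the identification $S_K(k)\cong K$. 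Such a $W$ is stable under the $k$-span $kH\subseteq K$, and I claim $kH=K$: fixing all but the $j$-th coordinate to be $1$ shows that $kH$ contains the $k$-span of $(k_j)^{\lambda_j}$, which equals $\ell_j=k_j(\lambda_j)$ (compare the discussion preceding Theorem \ref{gmfield} and Section \ref{subs:irredpdiv}); hence $kH$ contains the subring of $K$ generated by all the $\ell_j$, which is already a field, so equals their compositum $K$. Thus $W\in\{0,K\}$. The main obstacle here is really the bookkeeping behind $\Psi$ — above all, the factorisation of $[\lambda_i]$ through $\R_{\ell_i/k}(\Gm)$, which requires unwinding Proposition \ref{pdiv} — together with the small but slightly delicate field-theoretic identity $kH=K$.
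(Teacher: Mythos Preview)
Your proof is correct and essentially identical to the paper's: both construct the same homomorphism $C\to\R_{K/k}(\Gm)$ (you assemble it as $\prod_i h_i\circ g_i$ with $g_i=[\lambda_i]$, the paper factors each leg as $\iota_i\circ\R_{k_i/k}(F^{e_i})$ and then applies the $\mu$-power map in $\R_{K/k}(\Gm)$, but since $\lambda_i=p^{e_i}\mu_i$ these agree), pull back the standard module, and verify irreducibility by showing the $k$-span of the image of $C(k)$ in $K^\times$ is all of $K$ because it contains each $\ell_i$. The only cosmetic difference is the order in which the $p$-power and prime-to-$p$ parts of $\lambda_i$ are applied, which does not affect the argument.
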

\begin{proof}We must exhibit an irreducible module of weight $\lambda$ and the correct dimension. For each $i$ write $\lambda_i=p^{e_i}\mu_i$ where $p\nmid \mu_i$ and put $\tilde k_i=k_i(p^{e_i})$. Since $k_i/k$ is purely inseparable we must have $\tilde k_i=k_i(\lambda_i)$ and hence $K$ is generated by the $\tilde k_i$. From \S\ref{subs:irredpdiv} recall there is a map $\R_{k_i/k}(F^{e_i}):\R_{k_i/k}(\Gm)\to \R_{\tilde k_i/k}(\Gm)$. Precomposing a module $M$ of weight $\nu$ for $\R_{\tilde k_i/k}(\Gm)$ with $\R_{k_i/k}(F^{e_i})$ gives a module $\R_{k_i/k}(F^{e_i})^*(M)$ of weight $p^e\nu$ for $\R_{k_i/k}(\Gm)$.

Now by definition of $K$, we have each $\tilde k_i$ a subfield of $K$ and hence we get an embedding $\iota_i:\R_{\tilde k_i/k}(\Gm)\hookrightarrow \R_{K/k}(\Gm)$ for each $i$. We also get a map $\R_{K/k}(\mu):\prod_{1\leq i\leq n}\R_{K/k}(\Gm)\to \R_{K/k}(\Gm)$ corresponding to the weight $\mu=(\mu_1,\dots,\mu_n)$.

Now form the composite map

\begin{equation}\label{eq:bigX}
\mathcal{X}:G\cong\prod_i \R_{k_i/k}(\Gm)\xrightarrow{\prod_i\R_{k_i/k}(F^{e_i})}\prod_i \R_{\tilde k_i/k}(\Gm)\xrightarrow{\prod_i\iota_i}\prod_i \R_{K/k}(\Gm)\xrightarrow{\R_{K/k}(\mu)} \R_{K/k}(\Gm).
\end{equation}

Then if $S$ is the natural module for $\R_{K/k}(\Gm)$ one sees easily that $\mathcal{X}^*(S)$ is a module for $C$ of weight $\lambda$. We must see that $\mathcal{X}^*(S)$ is irreducible; in other words, $\mathcal{X}^*(S)\cong L_C(\lambda)$.

But as $S$ is irreducible for $\R_{K/k}(\Gm)(k)$ we need only see that $\mathcal{X}$ induces a surjection of group algebras $k(C(k))\to k(\R_{K/k}(\Gm))(k)=K$. This follows essentially from the definition of $K$: we have $k\mathcal{X}(C)(k)$ contains $\tilde k_i=k\R_{\tilde k_i/k}(\Gm)(k)=k\tilde k_i^\times$ for each $i$ and so $k\mathcal{X}(C)(k)=K$ as required.
\end{proof} 

\begin{remark} We are indebted to the referee for the following observation. Starting from a pseudo-split $k$-group $C$ of the form $\R_{k'/k}(\Gm)$ for $k'$ a finite non-zero reduced $k$-algebra, one may recover the $k$-algebra $k'$ up to (usually non-unique) isomorphism. For, let the factor fields of $k'$ be $k_1,\dots,k_r$. Then if $K$ is a field which is minimal subject to the requirement that $C_K$ contains a direct $\Gm$-factor, then without loss of generality, $K$ contains $k_1$ say, and as $C_{k_1}$ has a direct $\Gm$-factor, we have $K=k_1$ by minimality. Thus $C$ has a direct factor $\R_{k_1/k}(\Gm)$. Taking the quotient of $C$ by this factor yields another $k$-group of the same form and we are done by induction.\end{remark}

\subsection{Rank 1 pseudo-split pseudo-reductive commutative groups}
Let $C$ be a pseudo-split pseudo-reductive commutative group with a maximal split torus $T$.
If $k'$ is the minimal field of definition of the unipotent radical of $C$ then we may form $\mathscr{C}:=\R_{k'/k}(C_{k'})\cong \R_{k'/k}(T_{k'}\ltimes \RR_{u,k'}(C_{k'}))=\R_{k'/k}(T_{k'})\ltimes U$, where $U=\R_{k'/k}(\RR_{u,k'}(C_{k'}))$ is a unipotent normal subgroup of $\mathscr{C}$. By the usual properties of Weil restriction, $C$ embeds as a canonical $k$-subgroup of $\mathscr{C}$. Since $U$ acts trivially on any simple $\mathscr{C}$-module, and since $T$ can be naturally identified as a maximal torus in $\mathscr{C}$, in view of Theorem \ref{existanduniq} we have natural correspondences
\[\text{simple }C\text{-modules}\longleftrightarrow X(T)\longleftrightarrow\text{simple }\R_{k'/k}(T_{k'})\text{-modules} \longleftrightarrow\text{simple }\mathscr{C}\text{-modules},\]
and thus we may proceed by assuming that $C$ is a subgroup of $D:=\R_{k'/k}(T_{k'})$. Now, we know the dimensions of the simple $D$-modules, by the work of the previous sections, so it is natural to ask how these modules restrict to $C$.
In this section we give an answer when the dimension of $T$ is $1$.
Let us start with a lemma:

\begin{lemma}\label{lem:stabsubfield}
Suppose $k'/k$ is a finite extension and let $G = \R_{k'/k}(\Gm)$.
Let $W$ be a proper non-trivial $k$-subspace of $k'$, and let $S = \Stab_G(W)$.
Then there exists an intermediate field $k\subseteq E \subsetneq k'$ such that $S\subseteq \R_{E/k}(\Gm)$. 
\end{lemma}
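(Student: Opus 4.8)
The plan is to show that $S = \Stab_G(W)$, being a smooth connected subgroup of $G = \R_{k'/k}(\Gm)$ containing the canonical copy of $\Gm$, is forced to be a Weil restriction of $\Gm$ along an intermediate extension. Concretely, I would work with the natural embedding $G \hookrightarrow \A_q$ (with $q = [k':k]$) coming from the action of $G$ on the standard module $S' = k'$ viewed as $\Ga$-valued points, so that $G(A) = (k' \otimes_k A)^\times$ acts on $k' \otimes_k A$ by multiplication. The first key observation is that $W \subseteq k'$ is stabilised by an element $g \in G(A) = (k'\otimes_k A)^\times$ precisely when $g \cdot W_A = W_A$ inside $k' \otimes_k A$, i.e.\ when multiplication by $g$ preserves the $A$-submodule $W_A = W \otimes_k A$.

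The heart of the argument is the following algebraic fact: the subset $E := \{x \in k' \mid x W \subseteq W\}$ is an intermediate field $k \subseteq E \subseteq k'$. It is clearly a $k$-subalgebra of $k'$ (closed under addition and multiplication, and containing $k$ since $W$ is a $k$-subspace); since $k'$ is a field and $E$ is a finite-dimensional $k$-subalgebra of the field $k'$ that is a domain, $E$ is itself a field. Moreover $E \ne k'$: if $E = k'$ then $W$ would be a $k'$-subspace of the one-dimensional $k'$-vector space $k'$, hence $W = 0$ or $W = k'$, contradicting that $W$ is proper and non-trivial. Thus $k \subseteq E \subsetneq k'$. The same computation works after base change: for any $k$-algebra $A$, an element $g \in (k'\otimes_k A)^\times$ satisfies $g W_A \subseteq W_A$ if and only if $g \in E \otimes_k A$ — here one uses that $W_A = W \otimes_k A$ and that the condition $gW_A \subseteq W_A$ is $A$-linear, so testing on the $k$-basis of $k'$ reduces it to the condition defining $E$, tensored up. Combined with the requirement that $g$ be a unit, this gives $g \in (E \otimes_k A)^\times \cap (k' \otimes_k A)^\times = \R_{E/k}(\Gm)(A)$, and hence $S(A) \subseteq \R_{E/k}(\Gm)(A)$ functorially, i.e.\ $S \subseteq \R_{E/k}(\Gm)$ as claimed. (One should also note $\R_{E/k}(\Gm) \subseteq G$ as a canonical subgroup via $E \subseteq k'$, so this inclusion makes sense.)

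The step I expect to be the main obstacle — or at least the one requiring the most care — is the base-change identity $\{g \in k'\otimes_k A : gW_A \subseteq W_A\} = E \otimes_k A$. The inclusion $\supseteq$ is immediate, but $\subseteq$ requires knowing that "stabilising $W$" commutes with base change; this is fine because $W \hookrightarrow k'$ is a split injection of $k$-vector spaces (choose a $k$-complement), so $W_A \hookrightarrow k'\otimes_k A$ stays split and the quotient $k'/W$ base-changes correctly, whence $\{x : xW \subseteq W\}$ is computed as the kernel of a $k$-linear map $k' \to \Hom_k(W, k'/W)$ and kernels of $k$-linear maps between finite-dimensional spaces commute with flat (indeed arbitrary) base change. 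Once this is in place the lemma follows formally. I would also remark that in fact $S = \R_{E/k}(\Gm)$ when $W$ is itself an $E$-subspace, but the one-sided inclusion is all that is needed here.
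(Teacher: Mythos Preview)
Your argument is correct and in fact slightly cleaner than the paper's, but it proceeds along a genuinely different line. The paper first replaces $W$ by $w^{-1}W$ to arrange $1\in W$, then takes $E$ to be the \emph{smallest $S$-stable $k$-subspace of $k'$ containing $1$} and argues that $x^{-1}E$ is again $S$-stable and contains $1$ for each nonzero $x\in E$, whence $E$ is a subfield; the containment $S\subseteq \R_{E/k}(\Gm)$ then comes from $S(A)\cdot 1\subseteq E\otimes_k A$. You instead take $E=\{x\in k': xW\subseteq W\}$, the multiplier ring of $W$, which is visibly a $k$-subalgebra of the field $k'$ and hence a field, with $E\neq k'$ forced by $W$ being neither $0$ nor $k'$; your base-change step, realising $E$ as the kernel of the $k$-linear map $k'\to\Hom_k(W,k'/W)$, is exactly what is needed to pass to arbitrary $A$. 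Your route avoids the preliminary reduction to $1\in W$ and gives the sharper conclusion $S=\R_{E/k}(\Gm)$ (since with your $E$ the space $W$ is automatically an $E$-module), whereas the paper's construction yields an $E$ tailored to $S$ rather than to $W$. Two small points: your opening sentence about $S$ being ``smooth connected'' and ``forced to be a Weil restriction'' is not actually used and does not match the argument you carry out; and the identity you write as $(E\otimes_k A)^\times\cap(k'\otimes_k A)^\times=\R_{E/k}(\Gm)(A)$ should really read $(E\otimes_k A)\cap(k'\otimes_k A)^\times=(E\otimes_k A)^\times$, which holds because $k'\otimes_k A$ is free over $E\otimes_k A$ (or equivalently by the block--triangular description of the parabolic stabiliser in $\GL_q$).
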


\begin{proof}
First recall that for the subspace $W$ we have the corresponding (additive) group functor
$W_a$ given by $W_a(A) := W\otimes_k A$ (see Section \ref{sec:Gmods}). 
Then $S$ is an algebraic subgroup scheme of $G$ and for every $k$-algebra $A$ we have, by \cite[I.2.12]{Jan03}
$$S(A) = \{g\in G(A)\mid gW_a(A)\subseteq W_a(A)\}.$$
Now, since all multiplication here is commutative, given any nonzero $w\in W = W_a(k)$ we can see that 
$S$ also stabilizes the $k$-subspace $w^{-1}W$ of $k'$.
Hence, we may assume that $1\in W$.

Now let $E$ denote the intersection of all $k$-subspaces of $k'$ containing $1$ and stabilized by $S$
(i.e., the intersection of all $k$-subspaces $X$ such that $1\in X$ and $S(A)(X\otimes_k A) \subseteq X\otimes_k A$ for all $k$-algebras $A$).
We show that $E$ is an intermediate field and that $S\subseteq E_a$.
Given this, $S$ is then contained in the corresponding multiplicative unit group, which is precisely $\R_{E/k}(\Gm)$.

To see that $E$ is a field we just need to show $E\setminus\{0\}$ is a group under multiplication.
So let $x\in E\setminus\{0\}$.
Appealing to the commutativity of multiplication again, we see that the $k$-subspace $x^{-1}E$ is $S$-stable and contains $x^{-1}x = 1$, so we must have $x^{-1}E = E$, as required.
Since $W$ is proper, $E$ is properly contained in $k'$.
The final step is to note that since $1\in E$ and $E$ is $S$-stable, 
we have $S(A)\subseteq E_a(A)$ for all $k$-algebras $A$. Hence $S\subseteq E_a$.
\end{proof}

\begin{prop} Let $C$ be a pseudo-split commutative pseudo-reductive group of rank $1$ and let $k'$ be the minimal field of definition of its unipotent radical. Set $D=\R_{k'/k}(\Gm)$. Then the restriction of $L_D(\lambda)$ to $C$ is irreducible.\end{prop}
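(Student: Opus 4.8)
The plan is to translate the claim into a statement about subfields of $k'$. Recall from the start of this subsection that, as $C$ has rank $1$, we have $C_{k'}/\RR_{u,k'}(C_{k'})\cong(\Gm)_{k'}$ and $\mathscr{C}=\R_{k'/k}(C_{k'})\cong D\times U$ with $U$ unipotent; write $\pi\colon C\hookrightarrow\mathscr{C}\twoheadrightarrow\mathscr{C}/U\cong D$ for the resulting canonical homomorphism, so that the restriction $L_D(\lambda)|_C$ is the pullback of $L_D(\lambda)$ along $\pi$. Write $\lambda=p^e\mu$ with $p\nmid\mu$ and put $\tilde k:=k'(\lambda)=k'(p^e)$. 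By Proposition~\ref{pdiv} and the explicit descriptions in \S\ref{subsec:k[C]}--\S\ref{subs:irredpdiv}, $L_D(\lambda)$ may be identified with the field $\tilde k$, on which $a\in D(k)=(k')^\times$ acts by multiplication by $a^\lambda=(a^{p^e})^\mu\in\tilde k^\times$ (here $a^{p^e}\in(k')^{p^e}\subseteq\tilde k$). Since $C$ is smooth and $k$ is infinite, $C(k)$ is Zariski-dense in $C$, so (arguing with $\Stab_C$ as in Lemma~\ref{lem:stabsubfield}) a subspace $W\subseteq\tilde k$ is a $C$-submodule of $L_D(\lambda)|_C$ exactly when it is stable under multiplication by each element of $\Gamma^\lambda:=\{\pi(c)^\lambda\mid c\in C(k)\}$; and then $\{x\in\tilde k\mid xW\subseteq W\}$ is a $k$-subalgebra of $\tilde k$, finite over $k$, hence a subfield of $\tilde k$ containing $\Gamma^\lambda$. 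Consequently $L_D(\lambda)|_C$ is irreducible if and only if the subfield $k(\Gamma^\lambda)$ of $\tilde k$ generated over $k$ by $\Gamma^\lambda$ equals $\tilde k$, where $\Gamma:=\pi(C(k))\subseteq(k')^\times$.

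First I would prove $k(\Gamma)=k'$. If instead $k(\Gamma)=E$ with $k\subseteq E\subsetneq k'$, then $\Gamma\subseteq E^\times=\R_{E/k}(\Gm)(k)$ inside $D(k)$, so $\pi^{-1}(\R_{E/k}(\Gm))$ is a closed subscheme of $C$ containing the dense set $C(k)$, hence equals $C$: thus $\pi$ factors through the canonical subgroup $\R_{E/k}(\Gm)$ of $D$. But $\pi$ corresponds, under the Weil-restriction adjunction, to the quotient map $q\colon C_{k'}\twoheadrightarrow C_{k'}/\RR_{u,k'}(C_{k'})\cong(\Gm)_{k'}$; chasing this adjunction through the tower $k\subseteq E\subseteq k'$ (using transitivity of Weil restriction), the factorisation of $\pi$ through $\R_{E/k}(\Gm)$ forces $q$ to be the base change along $k'/E$ of a homomorphism $\psi\colon C_E\to(\Gm)_E$. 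Then $\RR_{u,k'}(C_{k'})=\ker q=(\ker\psi)_{k'}$, so $\RR_u(C_{\bar k})=\RR_{u,k'}(C_{k'})_{\bar k}=(\ker\psi)_{\bar k}$ is defined over $E$, contradicting the minimality of $k'$. (If $k'=k$ then $C=D=\Gm$ and the proposition is trivial, so we may assume $k'\neq k$.) Hence $k(\Gamma)=k'$.

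Then I would deduce $k(\Gamma^\lambda)=\tilde k$. Applying the $e$-fold Frobenius ring homomorphism $x\mapsto x^{p^e}$ to the equality $k\text{-span}(\Gamma)=k'$ gives $k^{p^e}\text{-span}(\Gamma^{p^e})=(k')^{p^e}$, so the $k$-span of $\Gamma^{p^e}$ contains $(k')^{p^e}$ and hence contains $\tilde k$, the $k$-span of the $p^e$-th powers in $k'$; as it is also contained in $\tilde k$, we get $k\text{-span}(\Gamma^{p^e})=\tilde k$. Now $\Gamma^\lambda=(\Gamma^{p^e})^\mu$, and since $\tilde k/k$ is purely inseparable the group $\tilde k^\times/k^\times$ is a finite abelian $p$-group, on which raising to the prime-to-$p$ power $\mu$ is an automorphism; so $\Gamma^\lambda$ and $\Gamma^{p^e}$ have the same image in $\tilde k^\times/k^\times$. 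Since the $k$-span of any subgroup of $\tilde k^\times$ equals the $k$-span of a set of representatives for its image in $\tilde k^\times/k^\times$, and so depends only on that image, we conclude $k\text{-span}(\Gamma^\lambda)=k\text{-span}(\Gamma^{p^e})=\tilde k$, which finishes the argument.

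The step I expect to be the main obstacle is the descent argument above: extracting, from the bare fact that $\pi(C)\subseteq\R_{E/k}(\Gm)$, the conclusion that $q$ — equivalently $\RR_u(C_{\bar k})$ — descends to $E$, which requires spelling out carefully the compatibility of the Weil-restriction adjunction with the tower $k\subseteq E\subseteq k'$. Everything else is a density argument together with elementary field theory.
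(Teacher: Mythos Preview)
Your proof is correct and follows broadly the same strategy as the paper's: identify $L_D(\lambda)$ with the field $\tilde k=k'(\lambda)$, relate $C$-submodules to intermediate subfields, and invoke the minimality of $k'$. The executions differ in direction, however. The paper works backward: having already reduced to $C\subseteq D$, it takes the (a~priori possibly proper) subfield $E=L_C(\lambda)\subseteq\tilde k$, pulls it back along the $p^e$-Frobenius to a subfield $E'\subseteq k'$, observes that $C\subseteq\R_{E'/k}(\Gm)$ as subgroups of $D$, and concludes $E'=k'$ by minimality, whence $E=\tilde k$. You instead work forward: you first show $k(\Gamma)=k'$ via an explicit Weil-adjunction descent of the quotient map $q$, then push this equality through Frobenius and the prime-to-$p$ twist $\mu$ to obtain $k(\Gamma^\lambda)=\tilde k$. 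Your adjunction argument is more explicit than the paper's one-line appeal to minimality (which silently uses that $\RR_u(C_{\bar k})=C_{\bar k}\cap\RR_u(\R_{E'/k}(\Gm)_{\bar k})$ descends to $E'$), and your handling of the $\mu$-twist via the automorphism of $\tilde k^\times/k^\times$ makes visible a step the paper leaves implicit.

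One small correction: $\tilde k^\times/k^\times$ is in general \emph{not} finite (take $k=\F_p(t)$, $\tilde k=k(t^{1/p})$; the elements $t^{1/p}+a$ for $a\in k$ lie in pairwise distinct cosets). What you actually need, and what holds, is that it is an abelian group of exponent dividing $p^f$, where $f$ is the exponent of $\tilde k/k$; then the $\mu$-th power map has the two-sided inverse $x\mapsto x^\nu$ for any $\nu$ with $\mu\nu\equiv1\pmod{p^f}$. With this adjustment your final step goes through unchanged: any subgroup $H$ of such a group satisfies $H^\mu\subseteq H$ and $H^\nu\subseteq H$, hence $H^\mu=H$, so $\Gamma^\lambda$ and $\Gamma^{p^e}$ do have the same image in $\tilde k^\times/k^\times$.
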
 

\begin{proof}
Pick a weight $\lambda = p^e\mu$. Then the irreducible $L_{D}(\lambda)$ identifies with the field $k'(\lambda) = k(p^e)$ inside $k'$, and this contains a copy of the irreducible $L_C(\lambda)$.
Now, by the argument in the proof of Lemma \ref{lem:stabsubfield}, we may assume that $L_C(\lambda)$ contains some $C$-stable subfield $E$; but $L_C(\lambda)$ is irreducible, so $E=L_C(\lambda)$. 
Therefore, the image of $C$ under the representation is contained in $\R_{E/k}(\Gm)$. 
Let $E' \subseteq k'$ be the set of $x$ such that $x^{p^e} \in E$ (the preimage of $E$ under Frobenius).

Then $E'$ is a subfield of $k'$ containing $k$ because $E$ is a subfield of $k'$ and $k \subseteq E'$ obviously. Also, $C$ is contained in $\R_{E'/k}(\Gm)$, so actually $E' = k'$ by minimality of $k'$. But then $E$ must be $k'(\lambda)$ by definition of $k'(\lambda)$.
\end{proof}

\begin{remark}
As a closing remark, we note that similar arguments using the observations at the start of this section and Lemma \ref{lem:stabsubfield} show that for any commutative pseudo-split pseudo-reductive
group $C$, the irreducible module $L_C(\lambda)$ is a subfield of the minimal field of definition of the unipotent radical of $C$.
Similar results have been proved by Brion \cite[Prop.~3.1]{Bri18} in a slightly different context, and using a different approach.
\end{remark}

\bigskip
{\bf Acknowledgements}:
We would like to acknowledge very helpful discussions with Ben Martin, particularly around the material in Section \ref{sec:exist}.
We also thank Brian Conrad and Gopal Prasad for comments and discussion. 
Lastly, we thank the anonymous referees for their careful reading of earlier versions and their suggestions for improvements.

{\footnotesize
\bibliographystyle{amsalpha}
\bibliography{bib}}
\end{document}